 \title[Characterization of varieties of Fano type]{Characterization of varieties of Fano type via singularities of Cox rings}
\author[Y. Gongyo]{Yoshinori  Gongyo}
\author[S. Okawa]{Shinnosuke  Okawa}
\author[A. Sannai]{Akiyoshi Sannai}
\author[S. Takagi]{Shunsuke  Takagi}
\address{Graduate School of Mathematical Sciences, 
the University of Tokyo, 3-8-1 Komaba, Meguro-ku, Tokyo 153-8914, Japan.}
\email{gongyo@ms.u-tokyo.ac.jp}
\email{okawa@ms.u-tokyo.ac.jp}
\email{sannai@ms.u-tokyo.ac.jp}
\email{stakagi@ms.u-tokyo.ac.jp}
\date{\today, version 1.26}
\subjclass[2010]{Primary 14J45; Secondary 13A35, 14B05, 14E30.}
\keywords{Cox rings, varieties of Fano type, globally $F$-regular varieties, Mori dream spaces}
\dedicatory{Dedicated to Professor~Yujiro~Kawamata on the~occasion of his~sixtieth~birthday.}
\newcommand{\Spec}[0]{{\operatorname{Spec}}}
\newcommand{\Pic}[0]{{\operatorname{Pic}}}
\DeclareMathOperator{\Cl}{Cl}
\newtheorem{thm}{Theorem}[section]
\newtheorem{prop}[thm]{Proposition}
\newtheorem{lem}[thm]{Lemma}
\newtheorem{cor}[thm]{Corollary}
\newtheorem{conj}[thm]{Conjecture}
\newtheorem{cl}[thm]{Claim}
\theoremstyle{definition}
\newtheorem{defi}[thm]{Definition}
\newtheorem*{ack}{Acknowledgments}
\theoremstyle{definition}
\newtheorem{rem}[thm]{Remark}
\begin{document}
\bibliographystyle{amsalpha+}
 
 \maketitle
 
\begin{abstract}
We show that every Mori dream space of globally $F$-regular type is of Fano type. 
As an application, we give a characterization of varieties of Fano type in terms of the singularities of their Cox rings.
\end{abstract}

\tableofcontents

\section{Introduction}\label{fano-intro}

The notion of Cox rings was defined in \cite{hk}, generalizing Cox's homogeneous coordinate ring \cite{cox} of projective toric varieties.

Let $X$ be a normal ($\mathbb{Q}$-factorial) projective variety over an algebraically closed field $k$. 
Suppose that the divisor class group $\mathrm{Cl}(X)$ is finitely generated and free, and let $D_1, \cdots, D_r$ be Weil divisors on $X$ which form a basis of $\mathrm{Cl}(X)$. 
Then the ring
$$\bigoplus_{(n_1, \dots, n_r) \in \mathbb{Z}^n }H^0(X, \mathcal{O}_X(n_1D_1+\cdots+n_rD_r)) \subseteq k(X)[t_1^{\pm}, \cdots, t_r^{\pm}]$$
is called the Cox ring of $X$ (for the case when $\Cl{(X)}$ has torsion, see Definition \ref{def of cox ring} and Remark
\ref{ambiguity of Cox ring}). If the Cox ring of a variety $X$ is finitely generated over $k$, $X$ is called a ($\mathbb{Q}$-factorial) Mori dream space. This definition is equivalent to the geometric one given in Definition \ref{Mori dream space}
(\cite[Proposition 2.9]{hk}).
Projective toric varieties are Mori dream spaces and their Cox rings are
isomorphic to polynomial rings \cite{cox}. The converse also holds \cite{hk},
characterizing toric varieties via properties of Cox rings.

We say that $X$ is of \textit{Fano type} if there exists an effective $\mathbb{Q}$-divisor $\Delta$ on $X$ such that $-(K_X+\Delta)$ is ample and $(X, \Delta)$ is klt.  
It is known by \cite{bchm} that $\mathbb{Q}$-factorial varieties of Fano type are Mori dream spaces. 
Since projective toric varieties are of Fano type, this result generalizes
the fact that projective toric varieties are Mori dream spaces.
Therefore, in view of the characterization of toric varieties mentioned above, it is natural to expect
a similar result for varieties of Fano type.
The purpose of this paper is to
give a characterization of varieties of Fano type in terms of the singularities of their Cox rings.

\begin{thm}[{=Theorem \ref{sannai conj}}]\label{charact of fano}
Let $X$ be a $\mathbb{Q}$-factorial
normal projective variety over an algebraically closed field of characteristic zero.
Then $X$ is of Fano type if and only if $X$ is a Mori dream space and its Cox ring has only log terminal singularities.
\end{thm}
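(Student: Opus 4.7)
The plan is to prove the two implications of the biconditional separately; the reverse implication is the substantive new content and will be reduced to the main theorem announced in the abstract.

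For the forward implication, assume $X$ is of Fano type. That $X$ is a Mori dream space is Birkar--Cascini--Hacon--McKernan, as already recorded in the introduction. To see that $\Cox(X)$ has log terminal singularities, I would fix an effective $\mathbb{Q}$-divisor $\Delta$ making $(X,\Delta)$ klt with $-(K_X+\Delta)$ ample. Since $\Cox(X)$ is $\Cl(X)$-graded, $\Spec \Cox(X)$ carries a natural action of the dual torus whose GIT quotient reconstructs $X$. I would construct a boundary $\widetilde{\Delta}$ on $\Spec \Cox(X)$ descending from $\Delta$ and check, by computing discrepancies on a common log resolution that is equivariant with respect to the torus action, that $(\Spec\Cox(X),\widetilde{\Delta})$ is klt with essentially trivial log canonical class; klt-ness of the total coordinate ring then follows by forgetting the boundary. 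This direction is essentially a bookkeeping translation between divisors upstairs and downstairs.

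For the reverse implication, assume $X$ is a Mori dream space whose Cox ring has only log terminal singularities. I would proceed in three steps. First, by the Hara--Watanabe correspondence between klt singularities in characteristic zero and singularities of strongly $F$-regular type, $\Cox(X)$ is of strongly $F$-regular type. Second, I would transfer this to the variety $X$ itself: because the graded pieces of $\Cox(X)$ are the spaces $H^0(X, \mathcal{O}_X(D))$ for Weil divisors $D$, a Frobenius splitting of the Cox ring respects the $\Cl(X)$-grading and induces compatible splittings of all reflexive twists on the reductions $X_p$, so that $X$ itself is of globally $F$-regular type. Third, I would invoke the theorem announced in the abstract: every Mori dream space of globally $F$-regular type is of Fano type, which produces the desired boundary $\Delta$ on $X$.

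The principal obstacle is the third step, which is the main theorem of the paper; it will presumably build a log Fano $\mathbb{Q}$-divisor $\Delta_p$ on the reductions $X_p$ from the Frobenius splittings (using the structure of a Mori dream space to control how splittings propagate across small birational modifications in the Mori chamber decomposition) and then lift this structure to characteristic zero via a spreading-out argument together with boundedness of discrepancies. Secondary care is needed in the second step when $\Cl(X)$ has torsion, and in identifying the precise open locus over which $\Spec \Cox(X) \dashrightarrow X$ is a good torus quotient so that $F$-regularity passes cleanly between the total coordinate ring and the variety it governs.
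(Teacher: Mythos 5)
Your reverse implication is essentially the paper's argument: log terminal $\Rightarrow$ strongly $F$-regular type for the Cox ring (Hara), then descent to the section ring $R(X,H)$ of an ample $H\in\Gamma$ as a graded direct summand, then Smith's criterion to get that $X$ is of globally $F$-regular type, then Theorem \ref{logfano}. That part is sound, and your worry about the torus-quotient locus is unnecessary there --- the direct-summand trick with $R_X(\mathbb{Z}H)\subset R_X(\Gamma)$ avoids the quotient map entirely, which is exactly how the paper handles it.

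The forward implication is where you have a genuine gap. You dismiss it as ``a bookkeeping translation between divisors upstairs and downstairs,'' but $\Spec\Cox(X)\dashrightarrow X$ is only a torus quotient over the semistable locus; there is no common log resolution of $X$ and $\Spec\Cox(X)$ in the usual sense, and divisors on resolutions of $\Spec\Cox(X)$ whose centers lie in the unstable locus have no counterpart on $X$, so their discrepancies cannot be read off from $(X,\Delta)$ by any pullback formula. This is precisely the point the authors flag: Brown proves the statement by characteristic-zero methods only in the special case $\Delta=0$ with $-K_X$ ample, via a nontrivial relative MMP argument rather than bookkeeping, and the paper explicitly remarks that it does not know how to prove the general Fano-type case without characteristic $p$. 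The paper's actual forward direction runs through positive characteristic: Fano type implies globally $F$-regular type (Schwede--Smith); then Lemma \ref{good model} --- itself a substantive step relying on the finiteness of models for Mori dream spaces, vanishing for globally $F$-split varieties, and cohomology-and-base-change --- shows that reduction mod $p$ commutes with forming the multi-section ring, so that $R_X(\Gamma)_\mu\cong R_{X_\mu}(\Gamma_\mu)$; Hashimoto's lemma then gives strong $F$-regularity of these reductions, hence $R_X(\Gamma)$ is of strongly $F$-regular type, and Hara--Watanabe converts this to log terminality. Your proposal contains no substitute for either the discrepancy control over the unstable locus (if you insist on the characteristic-zero route) or for Lemma \ref{good model} (if you switch to the characteristic-$p$ route), so as written the forward direction does not go through.
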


Brown \cite{brown-Fano} recently proved that if $X$ is a $\mathbb{Q}$-factorial Fano variety with only log terminal singularities, then its Cox ring has only log terminal singularities via completely different arguments from ours.
Our proof of Theorem \ref{charact of fano} is based on
the notion of global $F$-regularity, which is defined for projective varieties over a field of positive characteristic via splitting of Frobenius morphisms. 
A projective variety over a field of characteristic zero is said to be of globally $F$-regular type if its modulo $p$ reduction is globally $F$-regular for almost all $p$ (see Definition \ref{type} for the precise definition).
Schwede--Smith \cite{schsmith-logfano} proved that varieties of Fano type are of globally $F$-regular type, and they asked whether the converse is true. 
We give an affirmative answer to their question in the case of Mori dream spaces.

\begin{thm}\label{logfano}Let $X$ be a $\mathbb{Q}$-factorial Mori dream space over a field of characteristic zero. Then $X$ is of Fano type if and only if it is of globally $F$-regular type. 
\end{thm}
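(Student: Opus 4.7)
The forward implication is the theorem of Schwede and Smith that varieties of Fano type are of globally $F$-regular type. The task is therefore the converse: an MDS of globally $F$-regular type is of Fano type.

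The plan is to translate the problem from $X$ to its Cox ring $R = \Cox(X)$, which is finitely generated by the MDS hypothesis and realizes $X$ (away from an irrelevant locus) as the good GIT quotient of $\Spec(R)$ by the Picard torus $T = \operatorname{Hom}(\Cl(X), \mathbb{G}_m)$. The first ingredient I would establish, in positive characteristic, is the equivalence
\[
X \text{ is globally } F\text{-regular} \iff R \text{ is strongly } F\text{-regular.}
\]
The direction ``$R$ strongly $F$-regular $\Rightarrow X$ globally $F$-regular'' follows from general descent of strong $F$-regularity under reductive group quotients. For the converse, one assembles a Frobenius splitting of $R$ from global $F$-splittings of the line bundles $\mathcal{O}_X(D)$, exploiting the $\Cl(X)$-grading of $R$ together with the fact that the finite generation of $\Cl(X)$ reduces the task to finitely many compatible choices.

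Granting this equivalence, the hypothesis that $X$ is of globally $F$-regular type transports to $R$ being of strongly $F$-regular type. The Hara--Watanabe correspondence between strongly $F$-regular type and klt singularities then yields that $R$ has log terminal singularities in characteristic zero.

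The main obstacle is the last step: deducing that $X$ is of Fano type from the log terminal property of $R$. Conceptually, one should write a Watanabe-type ramification formula
\[
K_{\Spec(R)} \sim_{\mathbb{Q}} \pi^*(K_X + \Delta_0)
\]
with $\Delta_0$ an effective $\mathbb{Q}$-divisor supported on the images of the torus-invariant prime divisors of $\Spec(R)$; the klt property of $R$ should then descend along $\pi$ to log canonicity of the pair $(X, \Delta_0)$. To upgrade this to the genuine Fano-type condition, I would scale the boundary by $(1-\epsilon)$ to gain strict log terminality, and then invoke the MDS hypothesis (finite generation of the effective cone together with the Mori chamber decomposition) to adjust the representative within its $\mathbb{Q}$-rational class so that $-(K_X + \Delta)$ lies in the ample cone. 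The naturality of $\Delta_0$ does not automatically deliver anti-ampleness of $K_X + \Delta$, and this is precisely where the MDS structure is essential.
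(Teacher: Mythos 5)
Your proposal diverges from the paper's proof (which runs a $(-K_X)$-MMP, shows each step preserves globally $F$-regular type via Lemma \ref{glFness for MMP}, concludes the minimal model is of Fano type because $-K_{X'}$ is nef and big and $X'$ is klt, and then traces the MMP backwards using the negativity lemma and Lemma \ref{inv-flip}), and the step you yourself flag as ``the main obstacle'' is a genuine gap, not a technicality. The klt property of $\Spec R_X(\Gamma)$ does not descend to Fano type of $X$ via a crepant ramification formula. Over the locus where the quotient $\pi$ is a torus torsor (which is everything outside the irrelevant/unstable locus, a set of codimension at least two in $\Spec R$), one has $K_{\Spec R}=\pi^*K_X$ with $\Delta_0=0$, so the descent you describe only recovers the statement that $X$ itself has log terminal singularities. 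All of the positivity of $-K_X$ is encoded in the discrepancies of divisors lying \emph{over} the irrelevant locus, exactly the part your formula discards. Your proposed repair --- scale by $(1-\epsilon)$ and ``adjust the representative within its $\mathbb{Q}$-rational class so that $-(K_X+\Delta)$ lies in the ample cone'' --- is not meaningful: ampleness of $-(K_X+\Delta)$ depends only on the numerical class, which is fixed once $\Delta$ is fixed, and nothing in your argument produces a candidate $\Delta$ whose class is anti-ample. Indeed the paper proves the implication ``Cox ring klt $\Rightarrow$ Fano type'' (Theorem \ref{sannai conj}) only by \emph{reducing to} Theorem \ref{logfano}, so your route is in danger of circularity unless you supply an independent extraction of positivity from the singularity at the irrelevant locus, which you do not.

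There is also a secondary gap earlier in your argument: transporting ``$X$ of globally $F$-regular type'' to ``$R$ of strongly $F$-regular type'' requires knowing that reduction modulo $p$ commutes with forming the multi-section ring, i.e.\ $(R_X(\Gamma))_\mu\simeq R_{X_\mu}(\Gamma_\mu)$ for almost all $\mu$. This is Lemma \ref{good model}, whose proof is itself nontrivial (it uses the finiteness of models on a Mori dream space, vanishing theorems for globally $F$-regular varieties, and cohomology and base change); you assert the transport without justification. I would encourage you to look at the paper's MMP-based argument, which sidesteps the Cox ring entirely for this theorem: the only global positivity input needed there is the bigness of $-K_Y$ for each model $Y$ appearing in the $(-K_X)$-MMP, and that is extracted from global $F$-regularity in positive characteristic via Theorem \ref{ss1} together with Lemma \ref{good model} applied to the single divisor $-mK_Y$.
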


Theorem \ref{logfano} is a key to the proof of Theorem \ref{charact of fano}, so we outline its proof here. 
The only if part was already proved by \cite[Theorem 5.1]{schsmith-logfano}, so we explain the if part. 
Since $X$ is a $\mathbb{Q}$-factorial Mori dream space, we can run a $(-K_X)$-MMP which terminates in finitely many steps. 
A $(-K_X)$-MMP $X_i \dasharrow X_{i+1}$ usually makes the singularities of $X_i$ worse as $i$ increases, but in our setting, we can check that each $X_i$ is also of globally $F$-regular type. 
This means that each $X_i$ has only log terminal singularities, so that a $(-K_{X})$-minimal model becomes of Fano type. 
Finally we trace back the $(-K_X)$-MMP above and show that in each step the property of being of Fano type is preserved,
concluding the proof.

In order to prove Theorem \ref{charact of fano}, we also show that if $X$ is a $\mathbb{Q}$-factorial Mori dream space of globally $F$-regular type,  then modulo $p$ reduction of a multi-section ring of $X$ is the multi-section ring of modulo $p$ reduction $X_p$ of $X$ for almost all $p$ (Lemma \ref{good model}). The proof is based on the finiteness of contracting rational maps from a
fixed Mori dream space, vanishing theorems for globally $F$-regular varieties and cohomology-and-base-change arguments.
This result enables us to apply the theory of $F$-singularities to a Cox ring of $X$
and, as a consequence, we see that that a $\mathbb{Q}$-factorial Mori dream space over a field of characteristic zero is of globally $F$-regular type if and only if its Cox ring has only log terminal singularities. Thus, Theorem \ref{charact of fano} follows from Theorem \ref{logfano}. 

As an application of Theorem \ref{charact of fano}, we give an alternative proof of \cite[Corollary 3.3]{fg1}, \cite[Corollary 5.2]{fg2} and \cite[Theorem 2.9]{prokshok-mainII}. 

\begin{cor}[{=Theorem \ref{images of logFano}}] 
Let $f:X \to Y$ be a projective morphism between normal projective varieties over an algebraically closed field $k$ of characteristic zero. 
If $X$ is of Fano type, then $Y$ is of Fano type. 
\end{cor}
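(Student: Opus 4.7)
The plan is to reduce the corollary to Theorem~\ref{logfano} by transferring the property of being of globally $F$-regular type from $X$ to $Y$. After Stein factorization we may assume $f$ is surjective with connected fibers, so that $f_*\mathcal{O}_X=\mathcal{O}_Y$; the remaining finite step of the Stein factorization is handled separately by the standard trace/splitting argument for finite surjections of normal varieties in large characteristic.

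By \cite[Theorem 5.1]{schsmith-logfano} the Fano type variety $X$ is of globally $F$-regular type. For almost all primes $p$, the modulo $p$ reduction $f_p\colon X_p\to Y_p$ is a surjective projective morphism of normal varieties with $f_{p*}\mathcal{O}_{X_p}=\mathcal{O}_{Y_p}$ and with $X_p$ globally $F$-regular. The global $F$-regularity of $X_p$ descends to $Y_p$ through the split inclusion $\mathcal{O}_{Y_p}\hookrightarrow f_{p*}\mathcal{O}_{X_p}$: a splitting of $\mathcal{O}_{X_p}\to F^e_*\mathcal{O}_{X_p}(f_p^*D)$ for an effective Weil divisor $D$ on $Y_p$ pushes forward via $f_{p*}$ to a splitting of $\mathcal{O}_{Y_p}\to F^e_*\mathcal{O}_{Y_p}(D)$. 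Hence $Y_p$ is globally $F$-regular for almost all $p$, so $Y$ is of globally $F$-regular type.

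To apply Theorem~\ref{logfano}, it remains to verify that $Y$ is a $\mathbb{Q}$-factorial Mori dream space. Since $X$ is of Fano type, it is a $\mathbb{Q}$-factorial Mori dream space by \cite{bchm}, so $Y$ inherits the structure of a $\mathbb{Q}$-factorial Mori dream space via the theorem on images of Mori dream spaces under surjective projective morphisms (due to Okawa). Theorem~\ref{logfano} now applies and concludes that $Y$ is of Fano type.

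The main technical obstacle is the descent of global $F$-regularity along the surjective morphism $f$ in the second paragraph, which plays the role of the canonical bundle formula in the classical proofs of \cite{fg1,fg2,prokshok-mainII}. The other ingredients—the Mori dream space property of $Y$ and the final appeal to Theorem~\ref{logfano}—are by now direct consequences of general theory and the main results of this paper.
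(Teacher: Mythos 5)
Your overall strategy is the paper's: pass to globally $F$-regular type via Schwede--Smith, descend that property along $f$ (Stein factorization, then \cite[Proposition 1.2(2)]{HWY} for the fiber space part), and conclude with Theorem~\ref{logfano}. The descent step in your second paragraph is fine and is exactly what Lemma~\ref{glFness for MMP} records. Your treatment of the finite piece by the normalized trace in characteristic $p\nmid\deg f$ is a legitimate alternative to the paper, which instead quotes \cite{fg2} for that case.

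There is, however, a genuine gap in your third paragraph: the claim that $Y$ is a \emph{$\mathbb{Q}$-factorial} Mori dream space. Two things go wrong. First, a variety of Fano type need not be $\mathbb{Q}$-factorial, so even $X$ is only a Mori dream space in the not-necessarily-$\mathbb{Q}$-factorial sense of Section~\ref{non-Q-factorial} (finite generation of the Cox ring via \cite[Corollary 1.3.2]{bchm}). Second, and more seriously, Okawa's theorem on images of Mori dream spaces gives no control on $\mathbb{Q}$-factoriality of $Y$: the image of a $\mathbb{Q}$-factorial variety under a surjective (even birational) morphism routinely fails to be $\mathbb{Q}$-factorial, e.g.\ a divisorial-looking contraction of a smooth Fano to a variety with non-$\mathbb{Q}$-Cartier divisors. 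So Theorem~\ref{logfano}, whose hypothesis is that the variety is a $\mathbb{Q}$-factorial Mori dream space, does not apply to $Y$ as you invoke it. The missing ingredient is the reduction to the $\mathbb{Q}$-factorial case carried out in Section~\ref{non-Q-factorial}: take a small $\mathbb{Q}$-factorialization $\widetilde{Y}\to Y$ (Proposition~\ref{Q-factorization}), observe that globally $F$-regular type passes to $\widetilde{Y}$ because small birational maps preserve it (Lemma~\ref{glFness for MMP}), apply Theorem~\ref{logfano} to $\widetilde{Y}$, and then descend the Fano type property back to $Y$ along the small contraction using Lemma~\ref{inv-flip}. This is precisely the content of Corollary~\ref{logfano'}; once you route your argument through it instead of through Theorem~\ref{logfano} directly, the proof closes.
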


A normal projective variety $X$ over a field of characteristic zero is said to be of \textit{Calabi--Yau type} if there exists an effective $\mathbb{Q}$-divisor $\Delta$ on $X$ such that $K_X+\Delta \sim_{\mathbb{Q}} 0$ and $(X, \Delta)$ is log canonical, and is said to be of dense globally $F$-split type if its modulo $p$ reduction is Frobenius split for infinitely many $p$. 
Using arguments similar to the proofs of Theorems \ref{charact of fano} and \ref{logfano}, 
we show analogous statements for varieties of Calabi--Yau type. 

\begin{thm}[{=Theorem \ref{CY surface}}]\label{cox ring of 2-dim} 
Let $X$ be a klt projective surface over an algebraically closed field $k$ of characteristic zero such that $K_X \sim_{\mathbb{Q}} 0$. 
If $X$ is a Mori dream space, then its Cox ring has only log canonical singularities. 
\end{thm}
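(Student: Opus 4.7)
The plan is to emulate the proof of Theorem \ref{charact of fano}, replacing Fano type by Calabi--Yau type, globally $F$-regular type by dense globally $F$-split type, and klt singularities by log canonical singularities. Three ingredients are needed: (i) the Calabi--Yau analog of Theorem \ref{logfano}, namely that $X$ is of dense globally $F$-split type; (ii) the analog of Lemma \ref{good model} identifying the modulo $p$ reduction of the Cox ring of $X$ with the Cox ring of $X_p$ for infinitely many $p$; and (iii) the implication ``dense $F$-pure type $\Rightarrow$ log canonical singularities'' in characteristic zero, due to Hara--Watanabe.

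For (i), observe first that since $X$ is klt with $K_X \sim_{\mathbb{Q}} 0$, taking $\Delta = 0$ exhibits $X$ as a variety of Calabi--Yau type. In the Fano setting, Theorem \ref{logfano} is proved by running a $(-K_X)$-MMP, but since $K_X$ is numerically trivial here no MMP is necessary; instead, one argues directly. By the classification of klt surfaces with numerically trivial canonical class, the minimal resolution of $X$ is birational to an abelian, K3, Enriques, or bielliptic surface, and in each of these classes one can produce infinitely many primes of ordinary, equivalently Frobenius split, reduction. The Mori dream hypothesis constrains the possibilities further; for instance, Mori dream K3 surfaces have finite automorphism group and therefore lie in a very restricted list with explicit control on their reduction behaviour.

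For (ii), one repeats the proof of Lemma \ref{good model} with ``globally $F$-regular'' replaced by ``globally $F$-split.'' The finiteness of contracting rational maps from a Mori dream space and the cohomology-and-base-change part carry over unchanged, while the Kawamata--Viehweg-type vanishing used on globally $F$-regular varieties is replaced by the weaker but adequate vanishing available on globally $F$-split varieties, applied to the semi-ample divisors appearing in the multi-section rings.

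Combining (i) and (ii), the modulo $p$ reduction $R_p$ of the Cox ring $R$ of $X$ is the Cox ring of $X_p$ for infinitely many $p$, and global $F$-splitness of $X_p$ propagates to its Cox ring, so $R_p$ is $F$-pure. Hence $R$ is of dense $F$-pure type, and the Hara--Watanabe theorem gives that $\Spec R$ has only log canonical singularities. The principal obstacle is step (i): the Calabi--Yau analog of Theorem \ref{logfano} is conjectural in higher dimensions (being essentially a form of the weak ordinarity conjecture), and the surface hypothesis is exactly what allows the classification-based argument to succeed.
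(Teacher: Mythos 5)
Your overall architecture --- reduce modulo $p$, show that $X_{\mu}$ is globally $F$-split for a dense set of closed points, propagate $F$-splitting to the Cox ring, identify the reduction of the Cox ring with the Cox ring of the reduction, and conclude by Hara--Watanabe --- is exactly the paper's. Your steps (ii) and (iii) require no new work: Lemma \ref{good model} is already stated and proved in the dense globally $F$-split case (the only vanishing input, Mehta--Ramanathan, needs nothing more than global $F$-splitting), the propagation to the Cox ring is Lemma \ref{sannai lem}, and the final application of \cite[Theorem 3.9]{hw} is as in the paper (you should also record that $\Spec R_X(\Gamma)$ is $\mathbb{Q}$-Gorenstein, which is what makes Hara--Watanabe applicable).

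The genuine gap is in your step (i). First, the classification you invoke is incorrect: a klt surface with $K_X\sim_{\mathbb{Q}}0$ need not have minimal resolution birational to an abelian, K3, Enriques or bielliptic surface. Log Enriques surfaces of index $\ge 3$ (for instance quotients of K3 surfaces by non-symplectic automorphisms of order $\ge 3$) are rational; the correct structural statement concerns the global index-one cover, which is a K3 or abelian surface with Du Val singularities, not the minimal resolution of $X$ itself. Second, even with the correct classification, the claim that one can always produce a dense set of primes of ordinary (equivalently Frobenius split) reduction for K3 surfaces is a deep arithmetic fact, and over an arbitrary algebraically closed field of characteristic zero one must spread out over a finitely generated $\mathbb{Z}$-algebra of possibly large dimension, where density of ordinary closed fibers is genuinely delicate; the finiteness of the automorphism group of a Mori dream K3 surface gives no control whatsoever over its reduction behaviour. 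The paper takes a different route for step (i) that avoids all of this: by \cite[Proposition 5.4]{schsmith-logfano} the affine cone $\Spec R(X,H)$ over $X$ is log canonical with an isolated non-klt point at the vertex, so \cite[Corollary 3.6]{ft} (dense $F$-purity of isolated log canonical singularities in low dimension) shows that $R(X,H)$ is of dense $F$-pure type, and Proposition \ref{glsec} then gives that $X$ is of dense globally $F$-split type. If you insist on a direct classification argument, you would essentially be reproving the relevant case of Fujino--Takagi, and you would need to fix both of the points above.
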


\begin{thm}\label{logcy}
Let $X$ be a $\mathbb{Q}$-factorial Mori dream space over a field of characteristic zero. 
If $X$ is of dense globally $F$-split type, then it is of Calabi--Yau type. 
\end{thm}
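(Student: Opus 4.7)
The plan is to mirror the proof of Theorem \ref{logfano}, substituting ``dense globally $F$-split type'' for ``globally $F$-regular type'' and ``Calabi--Yau type'' for ``Fano type''. Since $X$ is a $\mathbb{Q}$-factorial Mori dream space, one can run a $(-K_X)$-MMP
$$X = X_0 \dasharrow X_1 \dasharrow \cdots \dasharrow X_n$$
that terminates in finitely many steps with $-K_{X_n}$ nef. The first task is to verify that every step of this MMP preserves dense globally $F$-split type, in parallel with the analogous verification for globally $F$-regular type in the proof of Theorem \ref{logfano}. This should again rest on Lemma \ref{good model}, which identifies the modulo-$p$ reduction of the multi-section ring with the multi-section ring of the modulo-$p$ reduction, together with the behavior of $F$-purity under birational contractions and small $\mathbb{Q}$-factorial modifications. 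Consequently each $X_i$ is of dense globally $F$-split type and, in particular, is log canonical.

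Second, I would show that the endpoint $X_n$ is of Calabi--Yau type. By Grothendieck duality, the Frobenius splitting on $X_{n,p}$ provides, for infinitely many $p$, a nonzero section of $(p-1)(-K_{X_{n,p}})$; via Lemma \ref{good model} this implies that $-K_{X_n}$ is $\mathbb{Q}$-effective in characteristic zero, so some effective $\mathbb{Q}$-divisor $\Delta$ satisfies $K_{X_n}+\Delta\sim_{\mathbb{Q}}0$. To conclude Calabi--Yau type, I would choose $\Delta$ so that its modulo-$p$ reductions realize the boundaries arising from the given $F$-splittings; the pairs $(X_{n,p},\Delta_p)$ are then sharply $F$-pure for infinitely many $p$, and by the correspondence between sharp $F$-purity and log canonicity the pair $(X_n,\Delta)$ is log canonical. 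Finally, trace the MMP back: at each step $X_i\dasharrow X_{i+1}$, Calabi--Yau type is inherited by taking the strict transform of the boundary in the case of a small $\mathbb{Q}$-factorial modification, and by adding the exceptional divisor with the coefficient forced by $K_{X_i}+\Delta_i\sim_{\mathbb{Q}}\phi^*(K_{X_{i+1}}+\Delta_{i+1})$ in the case of a divisorial contraction, using a negativity lemma together with log canonicity of $X_i$ to keep the coefficient within $[0,1]$.

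The principal obstacle I anticipate is the construction of $\Delta$ on $X_n$: one must produce an effective $\mathbb{Q}$-divisor in characteristic zero whose modulo-$p$ reductions are compatible with the chosen Frobenius splittings for infinitely many $p$ and simultaneously yield a log canonical pair. Because the $F$-splittings at different primes in general correspond to divisors in distinct linear systems $|(p-1)(-K_{X_{n,p}})|$, this demands a careful interplay between finite generation of the multi-section ring (which ensures that all these vector spaces are base changes of a common one), the density of $F$-split primes, and the comparison between $F$-pure and log canonical thresholds.
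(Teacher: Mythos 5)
Your skeleton (run a $(-K_X)$-MMP, check each model stays of dense globally $F$-split type, handle the endpoint, trace back through divisorial contractions and flips) is the paper's, and your tracing-back step is essentially identical to the paper's (negativity lemma for divisorial contractions, the small-contraction lemma for flips; note that the relevant preservation lemma for the MMP steps is Lemma \ref{glFness for MMP}, not Lemma \ref{good model}). But the step you yourself flag as the principal obstacle --- producing a single effective $\mathbb{Q}$-divisor $\Delta$ on $X_n$ with $K_{X_n}+\Delta\sim_{\mathbb{Q}}0$ whose reductions are simultaneously compatible with Frobenius splittings at infinitely many primes, so as to invoke the $F$-pure/log canonical correspondence for pairs --- is a genuine gap, and it is not how the argument should go. The splittings at different primes give divisors in unrelated linear systems $|(1-p)K_{X_{n,p}}|$, and there is no mechanism in the paper (or in the literature you cite) to glue these into one characteristic-zero boundary; this is essentially as hard as the open direction of Conjecture \ref{Fpure vs lc}.

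The paper avoids this entirely by exploiting that $X'=X_n$ is a Mori dream space: by Definition \ref{Mori dream space}(ii) every nef divisor on $X'$ is semi-ample, so $-K_{X'}$ is semi-ample, and $X'$ itself has only log canonical singularities by Proposition \ref{just singularities}(1) (it is of dense globally $F$-split type and $\mathbb{Q}$-Gorenstein). A general member $\Delta$ of $\frac{1}{m}|-mK_{X'}|$ for $m\gg 0$ then gives a log canonical pair with $K_{X'}+\Delta\sim_{\mathbb{Q}}0$ by a Bertini argument --- no interplay with the individual splittings is needed. Separately, your write-up takes for granted that the MMP terminates with $-K_{X_n}$ nef rather than with a Mori fiber space; to justify this you need the observation (which the paper records first, via Lemma \ref{good model} and Theorem \ref{ss1}) that $-K_{X_i}$ is $\mathbb{Q}$-linearly equivalent to an effective divisor on \emph{every} intermediate model $X_i$, since each $X_i$ is again of dense globally $F$-split type; your derivation of $\mathbb{Q}$-effectivity only for the endpoint comes too late in the logical order to rule out the Mori fiber space case.
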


\begin{ack}
The first, second, and third author would like to thank Professor Yujiro Kawamata
for his warm encouragement. 
The third author is deeply grateful to Professors Mitsuyasu Hashimoto and Kazuhiko Kurano for their helpful conversations. 
A part of this paper was established during the first, second, and fourth authors were participating in the AGEA conference at National Taiwan University. 
They would like to thank NTU, especially Professor Jungkai Alfred Chen, who was the local organizer of the conference, for their hospitality.

The first and second authors were partially supported by Grant-in-Aid for JSPS Fellows $\sharp$22$\cdot$7399 and $\sharp$22$\cdot$849, respectively. 
The fourth author was partially supported by Grant-in-Aid for Young Scientists (B) 23740024 from JSPS.
\end{ack}

We will freely use the standard notations in \cite{komo} and \cite[3.1. Notation and conventions]{bchm}. 

\section{Preliminaries and lemmas}\label{preliminaries}

\subsection{Mori dream spaces}
Mori Dream Spaces were first introduced by Hu and Keel \cite{hk}. 
\begin{defi}\label{Mori dream space} 
A normal projective variety $X$ over a field is called a \textit{$\mathbb{Q}$-factorial Mori dream space} (or \textit{Mori dream space} for short) 
if $X$ satisfies the following three conditions:
\begin{enumerate}[(i)]
\item $X$ is $\mathbb{Q}$-factorial and $\Pic{(X)}_{\mathbb{Q}} \simeq \mathrm{N}^1{(X)}_{\mathbb{Q}},$\label{fin_pic}
\item $\mathrm{Nef}{(X)}$ is the affine hull of finitely many semi-ample
line bundles, 
\item there exists a finite collection of small birational maps $f_i: X \dasharrow X_i$
such that each $X_i$ satisfies (i) and (ii), and that $\mathrm{Mov}{(X)}$ is the union
of the $f_i^*(\mathrm{Nef}{(X_i)})$.
\end{enumerate}
\end{defi}

On a Mori dream space, as its name suggests, we can run an MMP for any divisor.
\begin{prop}{ (\cite[Proposition 1.11]{hk})}\label{mmp on mds}
Let $X$ be a $\mathbb{Q}$-factorial Mori dream space. 
Then for any divisor $D$ on $X$, a $D$-MMP can be run and terminates. 
\end{prop}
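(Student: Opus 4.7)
The plan is to run the $D$-MMP by tracking the class of $D$ inside the chamber decomposition of $\mathrm{Mov}(X)$ supplied by conditions (ii) and (iii) of Definition \ref{Mori dream space}. Condition (ii) makes $\mathrm{Nef}(X)$ rational polyhedral, and condition (iii) tiles $\mathrm{Mov}(X)$ by the finitely many pulled-back nef cones $f_i^*(\mathrm{Nef}(X_i))$. Both the existence of each MMP step and the eventual termination will be driven by this finiteness.

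For a single step, if $D$ is already nef we are done by (ii), which promotes nefness to semi-ampleness. Otherwise the rational polyhedrality of $\overline{\mathrm{NE}}(X)$, dual to $\mathrm{Nef}(X)$, lets me pick a $D$-negative extremal ray $R$, and the facet of $\mathrm{Nef}(X)$ dual to $R$ contains a semi-ample class by (ii), producing the required extremal contraction. If the contraction is of fiber type, the MMP stops. If it is divisorial, the target $X'$ is again a $\mathbb{Q}$-factorial Mori dream space with Picard rank one less, and I iterate with the pushforward of $D$. If it is small, I extract the flip from (iii): after a small perturbation pushing $[D]$ into the interior of $\mathrm{Mov}(X)$, the perturbed class lies in some chamber $f_j^*(\mathrm{Nef}(X_j))$, and $f_j : X \dasharrow X_j$ serves as the $D$-flip across the wall corresponding to $R$.

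Termination then follows from pure finiteness. Divisorial steps strictly decrease the Picard rank, so only finitely many can occur. Between them, each flip moves the perturbed class of $D$ from one chamber of the tiling to an adjacent one in the direction of the nef cone of the current model; since there are only finitely many chambers and no wall is recrossed, the sequence of flips terminates. The main obstacle I expect is the identification in the small case --- verifying that the geometric flip of the chosen extremal ray $R$ really agrees with one of the small modifications $f_j$ supplied by (iii), and that the Mori dream space structure is preserved under divisorial contractions. The cleanest way to handle both is to realize $X$ as a GIT quotient of $\mathrm{Spec}(\mathrm{Cox}(X))$ by its Picard torus, under which the Mori chamber decomposition matches the GIT wall-and-chamber decomposition and every MMP step becomes a step of variation of GIT.
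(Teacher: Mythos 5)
The paper does not prove this proposition at all: it is quoted verbatim from \cite[Proposition 1.11]{hk}, so there is no internal argument to compare against. Your sketch is essentially a reconstruction of Hu and Keel's own proof --- track the class of $D$ through the finite chamber decomposition provided by (ii) and (iii) of Definition \ref{Mori dream space}, and make every step rigorous by realizing $X$ as a VGIT quotient of $\Spec \Cox(X)$ by the Picard torus, so that divisorial contractions, flips, and Mori fiber spaces all become wall crossings. On the level of strategy you are exactly where the cited source is, and you correctly identify the two genuine difficulties (matching the flip of the chosen ray with one of the $f_j$, and preservation of the Mori dream space property).

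Two points in the sketch as literally written would not survive being made precise without the GIT identification you defer to at the end. First, ``a small perturbation pushing $[D]$ into the interior of $\mathrm{Mov}(X)$'' is meaningless when $D$ is not close to movable --- and the case $D=-K_X$ with $-K_X$ far from $\mathrm{Mov}(X)$, or even not pseudo-effective, is precisely what Theorem \ref{logfano} needs. The correct bookkeeping is to scale: follow the segment $[D+tA]$ for a fixed ample $A$ as $t$ decreases from a large value, staying inside the chamber structure of the effective cone until the segment either exits it (Mori fiber space) or reaches $[D]$ (minimal model). Second, ``no wall is recrossed'' is an assertion rather than an argument; it holds because the scaling path is a line segment in a finite fan (equivalently, the nef thresholds are monotone non-increasing), and some such monotonicity has to be supplied --- adjacency of chambers alone does not forbid cycling. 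With the VGIT picture both points are standard, so your final paragraph is the actual proof, and the perturbation argument preceding it should be replaced by scaling rather than repaired.
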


Moreover, we know the finiteness of models for a $\mathbb{Q}$-factorial Mori dream space as follows:

\begin{prop}[{\cite[Proposition 1.11]{hk}}]\label{finiteness of models}
Let $X$ be a $\mathbb{Q}$-factorial Mori dream space. Then there exists finitely many dominant rational contractions $f_i:X \dashrightarrow Y_i$ to a normal projective variety $Y_i$  such that, for any dominant rational contraction $f:X \dashrightarrow Y$, there exists $i$ such that $f \simeq f_i$, i.e. there exists an isomorphism $g:Y \to Y_i$ such that $g \circ f =f_i$ .
\end{prop}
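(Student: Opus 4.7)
The strategy is to exploit the Mori chamber decomposition of the movable cone built into condition (iii) of Definition \ref{Mori dream space}, combined with the finiteness of faces of each nef cone $\mathrm{Nef}(X_i)$ guaranteed by condition (ii). A dominant rational contraction is essentially determined by the pullback of the ample cone of its target, which lives in $\mathrm{Mov}(X)$; so the plan is to show that only finitely many chambers and faces contribute.

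First, given a dominant rational contraction $f: X \dashrightarrow Y$, I would pull back an ample class $[A]$ on $Y$ to obtain a class $[f^*A] \in \mathrm{Mov}(X)$. By condition (iii) there is some $i$ with $[f^*A] = f_i^*[D]$ for a class $[D] \in \mathrm{Nef}(X_i)$. Since $f_i: X \dashrightarrow X_i$ is small, the varieties $X$ and $X_i$ share function fields and divisor class groups, and the linear systems $|m f^*A|$ and $|m D|$ are identified for every $m$. By condition (ii) applied to $X_i$, the class $[D]$ is semi-ample, so for $m$ sufficiently divisible $|mD|$ induces a morphism $g: X_i \to Y'$ onto a normal projective variety, and a Zariski main theorem argument identifies $Y'$ with $Y$ and yields the factorization $f = g \circ f_i$ as rational maps.

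Next, I would argue that the isomorphism class of the contraction $g: X_i \to Y$ depends only on the face $F$ of $\mathrm{Nef}(X_i)$ whose relative interior contains $[D]$: two semi-ample classes on $X_i$ induce the same contraction up to isomorphism of the target if and only if they lie in the relative interior of a common face of $\mathrm{Nef}(X_i)$, again because both are obtained as $\mathrm{Proj}$ of the associated section ring. Condition (ii) guarantees that $\mathrm{Nef}(X_i)$ is rational polyhedral and hence has finitely many faces, while condition (iii) supplies finitely many $X_i$. Multiplying these finiteness statements yields a finite list representing every dominant rational contraction from $X$ up to isomorphism of the target.

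The main obstacle I anticipate is making the factorization $f = g \circ f_i$ fully rigorous. The rational map $f$ is defined only on an open subset of $X$, while the semi-ample morphism $g$ is defined everywhere on $X_i$; one must verify that, after restricting to the common open locus on which both $f_i$ and $f$ are defined, these two maps agree, and that the two a priori distinct targets $Y$ and $Y'$ are canonically identified. This relies on $f_i$ being small (so no prime divisor is contracted and $H^0$ of reflexive sheaves is preserved), the semi-ampleness furnished by (ii), and the normality of $Y$ and $Y'$, via a Zariski main theorem argument to glue the birational identification to a genuine isomorphism.
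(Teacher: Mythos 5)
The paper does not actually prove this proposition: it is quoted verbatim from Hu--Keel \cite[Proposition 1.11]{hk}, so the only comparison available is with their argument. Your route --- factor an arbitrary dominant rational contraction $f\colon X\dashrightarrow Y$ through one of the small modifications $f_i\colon X\dashrightarrow X_i$ by locating $f^*A$ in the chamber decomposition of $\mathrm{Mov}(X)$ from condition (iii), and then read off $g\colon X_i\to Y$ from the face of the rational polyhedral cone $\mathrm{Nef}(X_i)$ containing $f_i{}_*f^*A$ in its relative interior --- is exactly the Hu--Keel argument, and the bookkeeping (finitely many $X_i$ times finitely many faces, a morphism with connected fibers being determined up to isomorphism of the target by the curves it contracts) is correct.

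The one place you should not wave your hands is the very first step, the assertion that $[f^*A]\in\mathrm{Mov}(X)$, together with the companion identification $R(X,f^*A)\simeq R(Y,A)$ used to recognize $Y$ as $\operatorname{Proj}$ of a section ring. Both statements are \emph{false} for a general dominant rational map (for instance $[x:y:z]\mapsto[x^d:y^d:z^d]$ gives infinitely many pairwise non-isomorphic dominant rational self-maps of $\mathbb{P}^2$, so the proposition itself fails without the word ``contraction''), and they are precisely the content of the hypothesis that $f$ is a rational contraction in the sense of Hu--Keel: taking a resolution $p\colon W\to X$, $q\colon W\to Y$ with $f^*A:=p_*q^*A$, the contraction condition forces $H^0(X,mf^*A)=H^0(Y,mA)$, which gives the section-ring identification; and since a general member of $|mA|$ does not contain the (codimension $\ge 2$) image of any $f$-contracted divisor, no prime divisor of $X$ is a fixed component of $|mf^*A|$, which gives movability. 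Your write-up instead locates the delicate point in the smallness of $f_i$ and a Zariski main theorem gluing; those are the easy parts. Spell out the definition of rational contraction and derive the two displayed facts from it, and the proof is complete.
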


\subsection{Varieties of Fano type and of Calabi--Yau type}
In this paper, we use the following terminology.  

\begin{defi}[{cf.~\cite[Definition 2.34]{komo},\cite[Remark 4.2]{schsmith-logfano}}]\label{sing of pairs}
Let $X$ be a normal variety over a field $k$ of \textit{arbitrary characteristic} and $\Delta$ be an effective $\mathbb{Q}$-divisor on $X$ such that $K_X+\Delta$ is $\mathbb{Q}$-Cartier. 
Let $\pi: \widetilde{X} \to X$ be a birational morphism from a normal variety $\widetilde{X}$.
Then we can write 
$$K_{\widetilde{X}}=\pi^*(K_X+\Delta)+\sum_{E}a(E, X, \Delta) E,$$ 
where $E$ runs through all the distinct prime divisors on $\widetilde{X}$ and the $a(E, X, \Delta)$ are rational numbers. 
We say that the pair $(X, \Delta)$ is \textit{log canonical} (resp. \textit{klt}) if $a(E, X, \Delta) \ge -1$ (resp. $a(E, X, \Delta) >-1$) for every prime divisor $E$ over $X$.  
If $\Delta=0$, we simply say that $X$ has only log canonical singularities (resp. log terminal  singularities). 
\end{defi}

\begin{defi}[cf. {\cite[Lemma-Definition 2.6]{prokshok-mainII}}]\label{Fano pair}Let $X$ be a projective normal variety over a field and $\Delta$ be an effective $\mathbb{Q}$-divisor on $X$ such that $K_X+\Delta$ is $\mathbb{Q}$-Cartier. 
\begin{enumerate}[(i)]
\item We say that $(X,\Delta)$ is a {\em log Fano pair} if $-(K_X+\Delta)$ is ample and $(X, \Delta)$ is klt. 
We say that $X$ is of {\em Fano type} if there exists an effective $\mathbb Q$-divisor $\Delta$ on $X$ such that $(X,\Delta)$ is a log Fano pair. 
\item 
We say that $X$ is of \textit{Calabi--Yau type} if 
there exits an effective $\mathbb{Q}$-divisor $\Delta$ such that $K_X+\Delta \sim_{\mathbb{Q}}0$ and $(X, \Delta)$ is log canonical.  
\end{enumerate}
\end{defi}

\begin{rem}\label{rm-weakfano} If there exists an effective $\mathbb{Q}$-divisor $\Delta$ on $X$ such that $(X,\Delta)$ is klt and $-(K_X+\Delta)$ is nef and big, then $X$ is of Fano type. See \cite[Lemma-Definition 2.6]{prokshok-mainII}.

\end{rem}

\subsection{Globally $F$-regular and $F$-split varieties}

In this subsection, we briefly review the definitions and basic properties of {\em  global $F$-regularity} and {\em global $F$-splitting}. 

A scheme $X$ of prime characteristic $p$ is \textit{$F$-finite} if the Frobenius morphism $F:X \to X$ is finite. 
A ring $R$ of prime characteristic $p$ is called $F$-finite if $\Spec \, R$ is $F$-finite.
For each integer $e \ge 1$, the $e$-th iterated Frobenius pushforward $F^e_*R$ of a ring $R$ of prime characteristic $p$ is $R$ endowed with an $R$-module structure given by the $e$-th iterated Frobenius map $F^e:R \to R$. 

\begin{defi}\label{defi-sFreg}
Let $R$ be an $F$-finite integral domain of characteristic $p>0$. 

\begin{enumerate}[(i)]
\item 
We say that $R$ is \textit{$F$-pure} if the Frobenius map 
$$F:R \to F_*R \quad a \to a^p$$
splits as an $R$-module homomorphism.

\item
We say that $R$ is \textit{strongly $F$-regular} if for every nonzero element $c \in R$, there exists an integer $e \ge 1$ such that 
$$cF^e:R \to F^e_*R \quad a \to ca^{p^e}$$ 
splits as an $R$-module homomorphism. 

\end{enumerate}
An $F$-finite integral scheme $X$ has only $F$-pure (resp. strongly $F$-regular) singularities if $\mathcal{O}_{X, x}$ is $F$-pure (resp. strongly $F$-regular) for all $x \in X$. 
\end{defi}

\begin{defi}\label{defi-gFreg} 
Let $X$ be a normal projective variety defined over an $F$-finite field of characteristic $p>0$. 
\begin{enumerate}[(i)]
\item 
We say that $X$ is \textit{globally $F$-split} if the Frobenius map
$$\mathcal{O}_X \to F_*\mathcal{O}_X$$
splits as an $\mathcal{O}_X$-module homomorphism. 
\item
We say that $X$ is \textit{globally $F$-regular} if for every effective divisor $D$ on $X$, there exists an integer $e \ge 1$ such that the composition map 
$$\mathcal{O}_X \to F^e_*\mathcal{O}_X \hookrightarrow F^e_*\mathcal{O}_X(D)$$
of the $e$-times iterated Frobenius map $\mathcal{O}_X \to F^e_*\mathcal{O}_X$ with a natural inclusion $F^e_*\mathcal{O}_X \hookrightarrow F^e_*\mathcal{O}_X(D)$
splits as an $\mathcal{O}_X$-module homomorphism. 
\end{enumerate}
\end{defi}

\begin{rem}
Globally $F$-regular (resp. globally $F$-split) varieties have only strongly $F$-regular (resp. $F$-pure) singularities. 
\end{rem}

Let $X$ be a normal projective variety over a field. For any ample Cartier divisor $H$ on $X$, we denote the corresponding \textit{section ring} by
$$R(X, H)=\bigoplus_{m \ge 0}H^0(X, \mathcal{O}_X(mH)).$$

\begin{prop}[{\cite[Proposition 3.1 and Theorem 3.10]{smith}}]\label{glsec}
Let $X$ be a normal projective variety over an $F$-finite field of characteristic $p>0$. The following conditions are equivalent to each other:
\begin{enumerate}[$(1)$]
\item $X$ is globally $F$-split $($resp. globally $F$-regular$)$, 
\item the section ring $R(X, H)$ with respect to some ample divisor $H$ is $F$-pure $($resp. strongly $F$-regular$)$, 
\item the section ring $R(X, H)$ with respect to every ample divisor $H$ is $F$-pure $($resp. strongly $F$-regular$)$. 
\end{enumerate}
\end{prop}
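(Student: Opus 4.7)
The plan is to exploit the Proj--Spec correspondence between graded $R$-modules on the section ring $R = R(X, H)$ and quasi-coherent sheaves on $X = \Proj R$, under which Frobenius on $R$ (in a suitably graded sense) corresponds to Frobenius on $X$. The implication (3) $\Rightarrow$ (2) is trivial. I first reduce to the case that $H$ is very ample and $R$ is generated in degree one, by replacing $H$ with a suitable multiple; this is harmless because $R(X, mH)$ is the $m$-th Veronese subring $R^{(m)} \subseteq R$, which is a direct $R^{(m)}$-summand of $R$ as a graded module, so both $F$-purity and strong $F$-regularity pass between the two.

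For (1) $\Rightarrow$ (3), suppose $X$ is globally $F$-split with splitting $\psi : F_*\mathcal{O}_X \to \mathcal{O}_X$. Twisting by $\mathcal{O}_X(nH)$ and invoking the projection formula $F_*\mathcal{O}_X \otimes \mathcal{O}_X(nH) \cong F_*\mathcal{O}_X(pnH)$ yields $\psi_n : F_*\mathcal{O}_X(pnH) \to \mathcal{O}_X(nH)$; passing to global sections and summing over $n \geq 0$ produces $R$-linear maps $R_{pn} \to R_n$ that assemble into a graded splitting $\varphi : F_*R \to R$ of Frobenius on $R$, so $R$ is $F$-pure. For the strongly $F$-regular case, given a nonzero homogeneous $c \in R_d$ with vanishing divisor $D \in |dH|$, global $F$-regularity supplies $e \geq 1$ and a splitting of $\mathcal{O}_X \to F^e_*\mathcal{O}_X(D)$; the same global-sections machinery converts this into a splitting of $cF^e : R \to F^e_*R$. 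Since strong $F$-regularity for an $\mathbb{N}$-graded ring can be checked on homogeneous test elements, this suffices.

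For (2) $\Rightarrow$ (1), suppose $R$ is $F$-pure. Using the $\mathbb{G}_m$-action on $\Spec R$ coming from the grading, which is linearly reductive in any characteristic, any $R$-linear splitting of $R \to F_*R$ can be averaged to a graded splitting with respect to the natural $\tfrac{1}{p}\mathbb{Z}$-grading $(F_*R)_{n/p} = R_n$; sheafifying on $\Proj R = X$ then yields a splitting $F_*\mathcal{O}_X \to \mathcal{O}_X$. For strong $F$-regularity, given an effective divisor $D$ on $X$, use ampleness of $H$ to pick $k \gg 0$ and an effective $D' \sim kH - D$, so that $D + D'$ is the vanishing divisor of a homogeneous $s \in R_k$; a graded splitting of $sF^e : R \to F^e_*R$ sheafifies to a splitting of $\mathcal{O}_X \to F^e_*\mathcal{O}_X(D + D')$, whose restriction along the inclusion $F^e_*\mathcal{O}_X(D) \hookrightarrow F^e_*\mathcal{O}_X(D + D')$ then splits $\mathcal{O}_X \to F^e_*\mathcal{O}_X(D)$. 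The main technical hurdle is verifying that the $\tfrac{1}{p}\mathbb{Z}$-graded $R$-module $F_*R$ sheafifies on $\Proj R$ precisely to $F_*\mathcal{O}_X$, so that graded splittings on the ring translate faithfully to splittings on the variety; the averaging step for producing a graded splitting from an ungraded one is standard but must be set up carefully.
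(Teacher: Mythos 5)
The paper does not actually prove this proposition; it is quoted verbatim from Smith (\cite[Proposition 3.1 and Theorem 3.10]{smith}). Your argument is essentially a reconstruction of Smith's original proof: the cone/$\Proj$ correspondence, the identification of the integer-degree part of the $\tfrac{1}{p^e}\mathbb{Z}$-graded module $F^e_*R$ with $\bigoplus_n H^0(X, F^e_*\mathcal{O}_X\otimes\mathcal{O}_X(nH))$ via the projection formula, the passage from an arbitrary splitting to a degree-preserving one, and the conversion between homogeneous elements $c\in R_d$ and effective divisors $D\sim dH$. All the main ideas are in place and correct, including the reduction of strong $F$-regularity to a single (homogeneous) test element and the trick of enlarging $D$ to $D+D'\sim kH$ in the converse direction.

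One point to tighten: the opening reduction asserts that $F$-purity and strong $F$-regularity ``pass between'' $R=R(X,H)$ and the Veronese $R^{(m)}=R(X,mH)$. Only the descent $R\Rightarrow R^{(m)}$ is automatic (direct summand); ascent along the finite extension $R^{(m)}\subseteq R$ is not a general fact and should not be invoked. This does no harm to your proof provided you deploy the reduction only where it is needed: for $(2)\Rightarrow(1)$ you may freely replace the given $H$ by $mH$ (descending to the Veronese) before sheafifying, while for $(1)\Rightarrow(3)$ no reduction is necessary, since twisting by $\mathcal{O}_X(nH)$ and taking global sections works for an arbitrary ample $H$ without any generation-in-degree-one hypothesis. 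Also recall that $X$ is only normal, so the projection formula for $F^e_*\mathcal{O}_X(D)\otimes\mathcal{O}_X(nH)$ should be taken in the reflexive sense; this is standard and is how Smith handles it.
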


\begin{thm}[{\cite[Theorem 4.3]{schsmith-logfano}}]\label{ss1} 
Let $X$ be a normal projective variety over an $F$-finite field of characteristic $p>0$.  
If $X$ is globally $F$-regular $($resp. globally $F$-split$)$, then $X$ is of Fano type $($resp. Calabi--Yau type$)$.  
\end{thm}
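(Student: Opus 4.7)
The plan is to deduce both assertions uniformly by converting a Frobenius splitting of $X$ into an explicit effective boundary divisor via Grothendieck duality for the finite morphism $F^e : X \to X$, and then verifying the numerical and singularity conditions required by Definition \ref{Fano pair}.

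First I fix an ample effective Cartier divisor $A$ on $X$ in the globally $F$-regular case, and set $A = 0$ in the globally $F$-split case. By hypothesis, there are an integer $e \geq 1$ and an $\mathcal{O}_X$-linear map $\phi : F^e_*\mathcal{O}_X(A) \to \mathcal{O}_X$ splitting the composition $\mathcal{O}_X \to F^e_*\mathcal{O}_X \hookrightarrow F^e_*\mathcal{O}_X(A)$. Grothendieck duality for $F^e$, together with the identification $F^{e!}\mathcal{O}_X \cong \mathcal{O}_X((1-p^e)K_X)$ on the smooth locus (extended by normality), gives
\[
\mathrm{Hom}_{\mathcal{O}_X}\bigl(F^e_*\mathcal{O}_X(A),\, \mathcal{O}_X\bigr) \;\cong\; F^e_*\mathcal{O}_X\bigl((1-p^e)K_X - A\bigr),
\]
so $\phi$ corresponds to a nonzero global section, equivalently an effective divisor $D_\phi$ with $D_\phi \sim (1-p^e)K_X - A$.

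Next I set $\Delta := \tfrac{1}{p^e - 1} D_\phi \geq 0$. The linear equivalence above rearranges to
\[
K_X + \Delta \;\sim_{\mathbb{Q}}\; \frac{-A}{p^e - 1}.
\]
In the globally $F$-regular case $-(K_X + \Delta)$ is $\mathbb{Q}$-linearly equivalent to the ample divisor $A/(p^e - 1)$, which is the numerical half of ``Fano type''; in the globally $F$-split case $A = 0$ and $K_X + \Delta \sim_{\mathbb{Q}} 0$, which is the numerical half of ``Calabi--Yau type''.

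The remaining step, and the main obstacle, is to prove $(X, \Delta)$ is klt (resp. log canonical). The idea is to use that $\phi$ is not a single morphism but generates a whole family via Frobenius iteration: its $n$-th iterate splits a canonical map $\mathcal{O}_X \to F^{ne}_*\mathcal{O}_X\bigl((1 + p^e + \cdots + p^{(n-1)e})A\bigr)$, and in the globally $F$-regular case one may moreover precompose with twists by arbitrary effective divisors. Passing to a log resolution $\pi : \widetilde{X} \to X$ on which $\pi^*\Delta + \mathrm{Exc}(\pi)$ has simple normal crossing support, a Fedder-type trace computation shows that the non-degeneracy of these iterated splittings at the generic point of each exceptional prime divisor $E$ translates into the discrepancy bound $a(E, X, \Delta) > -1$ (resp. $\geq -1$). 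A cleaner route, exploiting Proposition \ref{glsec}, is to pass to the section ring $R = R(X, H)$ for some ample Cartier divisor $H$, where global $F$-regularity of $X$ becomes strong $F$-regularity of $R$; standard results in positive-characteristic singularity theory then give that $\mathrm{Spec}\, R$ is klt, and unwinding the grading of $R$ together with the construction of $\Delta$ descends the klt (resp. lc) condition down to the pair $(X, \Delta)$.
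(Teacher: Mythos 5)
The paper offers no proof of this statement---it is quoted verbatim from \cite[Theorem 4.3]{schsmith-logfano}---and your sketch is essentially a reconstruction of the Schwede--Smith argument: convert the splitting of $\mathcal{O}_X \to F^e_*\mathcal{O}_X(A)$ into an effective boundary $\Delta$ via duality for Frobenius, observe that the iterated (and, in the $F$-regular case, perturbed) splittings make $(X,\Delta)$ a globally sharply $F$-split (resp.\ globally $F$-regular) pair, and conclude klt/lc from the Hara--Watanabe-type local statement that strongly $F$-regular (resp.\ sharply $F$-pure) pairs are klt (resp.\ log canonical). One caution: your proposed ``cleaner route'' through the section ring $R(X,H)$ is not actually cleaner---$R(X,H)$ need not be $\mathbb{Q}$-Gorenstein, so the klt criterion does not apply to the cone without first choosing a boundary there, and descending that boundary to the specific $\Delta$ you constructed is no easier than the direct discrepancy computation of your first route, which is the one to keep.
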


\begin{lem}\label{glFness for MMP}Let $f: X \dashrightarrow X_1$ be a small birational map or an  algebraic fiber space of normal varieties over an $F$-finite field of characteristic $p>0$. 
If $X$ is globally $F$-regular (resp. globally $F$-split), then so is $X_1$.
\end{lem}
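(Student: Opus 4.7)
The plan is to treat the globally $F$-regular case (the globally $F$-split case follows with $e = 1$), reducing both cases to producing a splitting of $\mathcal{O}_{X_1} \to F^e_*\mathcal{O}_{X_1}(D_1)$ for every effective Cartier divisor $D_1$ on $X_1$. A preliminary observation I would establish is that it suffices to check effective \emph{Cartier} $D_1$: given any effective Weil divisor $D$ and any ample Cartier $A$ on $X_1$, a standard estimate based on ampleness of $A$ gives $H^0(X_1, \mathcal{O}_{X_1}(mA - D)) \neq 0$ for $m \gg 0$, producing an effective Cartier divisor $C \sim mA$ with $C \geq D$ as Weil divisors; the compatible inclusions $\mathcal{O}_{X_1} \hookrightarrow \mathcal{O}_{X_1}(D) \hookrightarrow \mathcal{O}_{X_1}(C)$ then convert a splitting for $C$ into one for $D$.

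For the fiber space case, assume $f \colon X \to X_1$ is proper with $f_*\mathcal{O}_X = \mathcal{O}_{X_1}$, and let $D_1$ be effective Cartier on $X_1$. Then $f^*D_1$ is effective Cartier on $X$, so global $F$-regularity of $X$ supplies $e \geq 1$ and a splitting $\phi \colon F^e_*\mathcal{O}_X(f^*D_1) \to \mathcal{O}_X$ of the natural map. The key remark is that Frobenius commutes with $f$ (so $f_* \circ F^e_{X,*} = F^e_{X_1,*} \circ f_*$) and the projection formula gives $f_*\mathcal{O}_X(f^*D_1) = \mathcal{O}_{X_1}(D_1)$; consequently $f_*\phi$ becomes a splitting of the natural map $\mathcal{O}_{X_1} \to F^e_*\mathcal{O}_{X_1}(D_1)$.

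For the small birational case, let $U \subseteq X$ and $U_1 \subseteq X_1$ be open subsets with complements of codimension $\geq 2$ on which $f$ restricts to an isomorphism. For an effective Cartier divisor $D_1$ on $X_1$, I would take $D$ to be the Zariski closure in $X$ of the pullback of $D_1|_{U_1}$ under this isomorphism; this is an effective Weil (generally non-Cartier) divisor, which is where both the preliminary reduction on $X_1$ and the flexibility of global $F$-regularity on $X$ for Weil divisors come in. A splitting $\phi \colon F^e_*\mathcal{O}_X(D) \to \mathcal{O}_X$ from global $F$-regularity of $X$, restricted to $U$ and transported via $U \cong U_1$, gives a splitting on $U_1$. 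By reflexivity of the sheaf of $\mathcal{O}_{X_1}$-linear homomorphisms from $F^e_*\mathcal{O}_{X_1}(D_1)$ to $\mathcal{O}_{X_1}$, it extends uniquely to a morphism $\psi$ on all of $X_1$, and the composition $\mathcal{O}_{X_1} \to F^e_*\mathcal{O}_{X_1}(D_1) \xrightarrow{\psi} \mathcal{O}_{X_1}$, being the identity on the dense open $U_1$, is the identity everywhere by normality. The main subtlety throughout is tracking the non-Cartier Weil divisor $D$ in the small case, which is precisely the reason the natural formulation of global $F$-regularity is for all effective Weil divisors rather than just the Cartier ones.
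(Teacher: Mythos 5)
Your proof is correct and follows the same strategy as the paper's: for the algebraic fiber space you push the splitting forward using $f_*\mathcal{O}_X=\mathcal{O}_{X_1}$, the compatibility of $f_*$ with $F^e_*$, and the projection formula, and for the small birational map you use that global $F$-regularity (resp.\ $F$-splitting) only depends on the complement of a codimension-two subset, implemented via reflexive extension of the splitting. The only difference is that the paper outsources these two steps to references (\cite{mr}, \cite{HWY} for the fiber space case and \cite{BK}, \cite{Hashi} for the small case) while you prove them directly; your details, including the preliminary reduction to effective Cartier divisors on $X_1$, all check out.
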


\begin{proof}\label{cl1} 
When $f$ is an algebraic fiber space,  the globally $F$-split case follows from \cite[Proposition 4]{mr}  and the globally $F$-regular case does from \cite[Proposition 1.2 (2)]{HWY}. 

When $f$ is a small birational map, $X$ and $X_1$ are isomorphic in codimension one. 
In general, a normal projective variety $Y$ is globally $F$-regular (resp. globally $F$-split) if and only if so is $Y \setminus E$, where $E \subseteq Y$ is a closed subset of codimension at least two (see \cite[1.1.7 Lemma (iii)]{BK} for the globally $F$-split case and \cite[Lemma 2.9]{Hashi} for globally $F$-regular case).
Thus, we obtain the assertion.   
\end{proof}

Now we briefly explain how to reduce things from characteristic zero to characteristic $p > 0$. 
The reader is referred to \cite[Chapter 2]{HH} and \cite[Section 3.2]{MS} for details. 

Let $X$ be a normal variety over a field $k$ of characteristic zero and $D=\sum_i d_i D_i$ be a $\mathbb{Q}$-divisor on $X$. 
Choosing a suitable finitely generated $\mathbb{Z}$-subalgebra $A$ of $k$, 
we can construct a scheme $X_A$ of finite type over $A$ and closed subschemes $D_{i, A} \subsetneq X_A$ such that 
there exists isomorphisms 
\[\xymatrix{
X \ar[r]^{\cong \hspace*{3em}} &  X_A \times_{\Spec \, A} \Spec \, k\\
D_i \ar[r]^{\cong \hspace*{3em}} \ar@{^{(}->}[u] & D_{i, A} \times_{\Spec \, A} \Spec \, k. \ar@{^{(}->}[u]\\
}\]
Note that we can enlarge $A$ by localizing at a single nonzero element and replacing $X_A$ and $D_{i,A}$ with the corresponding open subschemes. 
Thus, applying the generic freeness \cite[(2.1.4)]{HH}, we may assume that $X_A$ and $D_{i, A}$ are flat over $\Spec \, A$.
Enlarging $A$ if necessary, we may also assume that $X_A$ is normal and $D_{i, A}$ is a prime divisor on $X_A$. 
Letting $D_A:=\sum_i d_i D_{i,A}$, we refer to $(X_A, D_A)$ as a \textit{model} of $(X, D)$ over $A$.   

Given a closed point $\mu \in \Spec \, A$, we denote by $X_{\mu}$ (resp., $D_{i, \mu}$) the fiber of $X_A$ (resp., $D_{i, A}$) over $\mu$.  
Then $X_{\mu}$ is a scheme of finite type over the residue field $k(\mu)$ of $\mu$, which is a finite field.  
Enlarging $A$ if necessary, we may assume that  $X_{\mu}$ is a normal variety over $k(\mu)$, $D_{i, \mu}$ is a prime divisor on $X_{\mu}$ and consequently $D_{\mu}:=\sum_i d_i D_{i, \mu}$ is a $\mathbb{Q}$-divisor on $X_{\mu}$ for all closed points $\mu \in \Spec \, A$. 

Let $\Gamma$ be a finitely generated group of Weil divisors on $X$. 
We then refer to a group $\Gamma_A$ of Weil divisors on $X_A$ generated by a model of a system of generators of $\Gamma$ as a \textit{model} of $\Gamma$ over $A$. 
After enlarging $A$ if necessary, we denote by $\Gamma_{\mu}$ the group of Weil divisors on $X_{\mu}$ obtained by restricting divisors in $\Gamma_A$ over $\mu$. 

Given a morphism $f:X \to Y$ of varieties over $k$ and a model $(X_A, Y_A)$ of $(X, Y)$ over $A$,  after possibly enlarging $A$, we may assume that $f$ is induced by a morphism $f_A :X_A \to Y_A$ of schemes of finite type over $A$. 
Given a closed point $\mu \in \Spec \, A$, we obtain a corresponding morphism $f_{\mu}:X_{\mu} \to Y_{\mu}$ of schemes of finite type over $k(\mu)$. 
If $f$ is projective (resp. finite), after possibly enlarging $A$, we may assume that $f_{\mu}$ is projective (resp. finite) for all closed points $\mu \in \Spec \, A$. 

\begin{defi}\label{type}
Let the notation be as above. 
\begin{enumerate}[(i)]
\item A projective variety (resp. an affine variety) $X$ is said to be of \textit{globally $F$-regular type} (resp. \textit{strongly $F$-regular type}) if for a model of $X$ over a finitely generated $\mathbb{Z}$-subalgebra $A$ of $k$, there exists a dense open subset $S \subseteq \Spec \, A$ of closed points such that $X_{\mu}$ is globally $F$-regular (resp. strongly $F$-regular) for all $\mu \in S$. 
\item A projective variety (resp. an affine variety) $X$ is said to be of \textit{dense globally $F$-split type} (resp. \textit{dense $F$-pure type}) if for a model of $X$ over a finitely generated $\mathbb{Z}$-subalgebra $A$ of $k$, there exists a dense subset $S \subseteq \Spec \, A$ of closed points such that $X_{\mu}$ is globally $F$-split (resp. $F$-pure) for all $\mu \in S$. 
\end{enumerate}
\end{defi}

\begin{rem}
(1) The above definition is independent of the choice of a model. 

(2)
If $X$ is of globally $F$-regular type (resp. strongly $F$-regular type), then we can take a model $X_A$ of $X$ over some $A$ such that $X_{\mu}$ is globally $F$-regular (resp. strongly $F$-regular) for all closed points $\mu \in \Spec \, A$. 
\end{rem}

\begin{prop}\label{just singularities}
Let $X$ be a normal projective variety over a field of characteristic zero.
\begin{enumerate}[$(1)$]
\item If $X$ is $\mathbb{Q}$-Gorenstein and of globally $F$-regular type $($resp. dense globally $F$-split type$)$, then it has only log terminal singularities $($resp. log canonical singularities$)$. 
\item If $X$ is of Fano type, then $X$ is of globally $F$-regular type.  
\end{enumerate}
\end{prop}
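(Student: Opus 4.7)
The plan is to reduce both parts to well-known correspondences between $F$-singularities in positive characteristic and MMP-style singularities in characteristic zero, rather than producing the arguments from scratch.

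For part (1), the first observation is that both global $F$-regularity and global $F$-splitting localize: if a normal projective variety $Y$ over an $F$-finite field is globally $F$-regular (resp. globally $F$-split), then every local ring $\mathcal{O}_{Y,y}$ is strongly $F$-regular (resp. $F$-pure), because a splitting of $\mathcal{O}_Y \to F^e_*\mathcal{O}_Y(D)$ as $\mathcal{O}_Y$-modules restricts to a splitting on any affine open. Consequently, if $X$ is of globally $F$-regular type (resp. dense globally $F$-split type), one can choose a model $X_A$ so that $X_\mu$ has strongly $F$-regular (resp. $F$-pure) local rings for all $\mu$ in a dense open (resp. dense) subset of closed points of $\Spec A$. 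This means $X$ is of strongly $F$-regular type (resp. dense $F$-pure type) in the local sense. Since $X$ is assumed $\mathbb{Q}$-Gorenstein, I would then invoke the theorems of Hara and Hara--Watanabe that relate $F$-singularities to MMP singularities: strong $F$-regular type of the reductions implies that $X$ is klt, and dense $F$-pure type implies that $X$ is log canonical.

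For part (2), the assertion is precisely \cite[Theorem~5.1]{schsmith-logfano}, already quoted in the introduction. The plan for its proof, following Schwede--Smith, is as follows. Given a log Fano pair $(X,\Delta)$, pick a positive integer $n$ large and divisible enough that $H := -n(K_X+\Delta)$ is a very ample Cartier divisor, and consider the section ring $R = R(X, H)$. The affine cone $\Spec R$ carries a $\mathbb{Q}$-divisor $\Delta_R$ induced by $\Delta$ such that $(\Spec R, \Delta_R)$ is klt. By Hara's characterization of klt singularities through tight closure / $F$-regular type (for pairs), the reduction $R_\mu$ is strongly $F$-regular for almost all closed points $\mu$. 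Applying the equivalence in Proposition~\ref{glsec} fiber-by-fiber then translates this to global $F$-regularity of $X_\mu$ for almost all $\mu$, giving that $X$ is of globally $F$-regular type.

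The main obstacle in each part is the same: controlling the splittings of Frobenius uniformly as a function of the prime $p$, so as to obtain either a dense open or at least a dense set of primes where the splittings exist. This is exactly the content of the reduction mod $p$ machinery of Hara (for klt $\Leftrightarrow$ strongly $F$-regular type in the $\mathbb{Q}$-Gorenstein setting) and Schwede--Smith (for Fano type $\Rightarrow$ globally $F$-regular type). Beyond invoking these inputs, the proposition is essentially formal: the passage from local to global in part (1) uses only that splittings localize, and the section-ring translation in part (2) is provided by Proposition~\ref{glsec}.
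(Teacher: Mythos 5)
Your proposal matches the paper's proof: part (1) is handled exactly as you describe---global $F$-regularity (resp.\ global $F$-splitting) of the reductions gives strongly $F$-regular (resp.\ $F$-pure) local rings, so $X$ has singularities of strongly $F$-regular (resp.\ dense $F$-pure) type, and \cite[Theorem 3.9]{hw} then converts this to log terminal (resp.\ log canonical) under the $\mathbb{Q}$-Gorenstein hypothesis---while part (2) is, as you note, simply a citation of \cite[Theorem 5.1]{schsmith-logfano}. Your extra sketch of the Schwede--Smith cone argument is consistent with their proof but is not reproduced in the paper.
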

\begin{proof}
(2) is nothing but \cite[Theorem 5.1]{schsmith-logfano}. So, we will prove only (1). 

Since $X$ is of globally $F$-regular type (resp. dense globally $F$-regular type), then it has only singularities of strongly $F$-regular type (resp. dense $F$-pure type). 
It then follows from \cite[Theorem 3.9]{hw} that $X$ has only log terminal singularities (resp. log canonical singularities). 
\end{proof}

\subsection{Cox rings and their reductions to positive characteristic}
In this paper, we define Cox rings as follows: 

\begin{defi}[Multi-section rings and Cox rings]\label{def of cox ring}
Let $X$ be an integral normal scheme. For a semi-group $\Gamma$ of Weil divisors on $X$,
the $\Gamma$-graded ring
\begin{equation*}
R_X(\Gamma)=\bigoplus_{D\in\Gamma}H^0(X,\mathcal{O}_X(D))
\end{equation*}
is called the \textit{multi-section ring} of $\Gamma$.

Suppose that $\Cl{(X)}$ is finitely generated.
For such $X$, choose a group $\Gamma$ of Weil divisors on $X$ such that
$\Gamma_{\mathbb{Q}}\to \Cl{(X)}_{\mathbb{Q}}$ is an isomorphism.
Then the multi-section ring $R_X{(\Gamma)}$ is called a \textit{Cox ring} of $X$.
\end{defi}

\begin{rem}\label{ambiguity of Cox ring}
As seen above, the definition of a Cox ring depends on a choice of the group $\Gamma$.
When $\Cl{(X)}$ is a free group, it is common to take $\Gamma$ so that the natural map
$\Gamma\to\Cl{(X)}$ is an isomorphism. In this case, the corresponding multi-section ring does not
depend on the choice of such a group $\Gamma$, up to isomorphisms. 
In general Cox rings are not unique. 
Here we note that the basic properties of Cox rings are not affected by the ambiguity.

Let $m$ be a positive integer. Then the natural inclusion
$R_X(m\Gamma)\subset R_X(\Gamma)$ is an integral extension. Therefore
$R_X(\Gamma)$ is of finite type if $R_X(m\Gamma)$ is. Conversely,
we can represent $R_X(m\Gamma)$ as an invariant subring of $R_X(\Gamma)$
under an action of a finite group scheme. Therefore $R_X(m\Gamma)$ is of finite type
if $R_X(\Gamma)$ is. This shows that the finite generation of a Cox ring does not depend
on the choice of $\Gamma$.

Suppose that $m$ is not divisible by the characteristic of the base field.
Then $R_X(m\Gamma)\subset R_X(\Gamma)$ is \'{e}tale in codimension one
(this follows from \cite[Lemma 5.7.(1)]{schsmith-logfano}. See also \cite[Lemma 5.2.]{brown-Fano}).
This shows that in characteristic zero the log-canonicity (resp. log-terminality) of a Cox ring does not depend on the
choice of $\Gamma$, provided that they are of finite type (\cite[Proposition 5.20]{komo}).

Finally, $F$-purity (resp. quasi-$F$-regularity) of a Cox ring is also independent of the choice of $\Gamma$.
We prove it for $F$-purity, and the arguments for quasi-$F$-regularity are the same.
Suppose that $R_X(\Gamma)$ is a Cox ring and is $F$-pure. Take an ample divisor $H\in\Gamma$. Then
$R(X, H)=R_X(\mathbb{N}H)$ is also $F$-pure, since
$\mathbb{N}H$ is a sub-semigroup of $\Gamma$ (use the argument in the proof of Lemma \ref{lem-ring-sum} below).
By Proposition \ref{glsec}, this implies that $X$ is globally $F$-split. By Lemma \ref{sannai lem},
the multi-section ring $R_X(\Gamma')$ of any semigroup $\Gamma'$ of Weil divisors on $X$ is
$F$-split.
\end{rem}

The following is a basic fact on the finite generation of Cox rings. 
\begin{rem}[{\cite[Proposition 2.9]{hk}}]
Let $X$ be a normal projective variety satisfying $(\textup{\ref{fin_pic}})$ of Definition \ref{Mori dream space}.
Then $X$ is a Mori dream space if and only if its Cox rings are finitely generated over $k$.
\end{rem}

If the variety is a $\mathbb{Q}$-factorial Mori dream space of globally $F$-regular type, then we can show that taking multi-section rings commutes with reduction modulo $p$. 

\begin{lem}\label{good model}
Let $X$ be a $\mathbb{Q}$-factorial Mori dream space defined over a field $k$ of characteristic zero and $\Gamma$ 
be a finitely generated group of Cartier divisors on $X$. 
Suppose that $X$ is of globally $F$-regular type $($resp. dense globally $F$-split type$)$.
Then, replacing $\Gamma$ with a suitable positive multiple if necessary, we can take
a model $(X_A, \Gamma_A)$ of $(X, \Gamma)$ over a finitely generated $\mathbb{Z}$-subalgebra $A$ of $k$ and a dense open subset $($resp. a dense subset$)$ $S \subseteq \Spec \, A$ of closed points such that
\begin{enumerate}
\item $X_{\mu}$ is globally $F$-regular $($resp. globally $F$-split$)$, 
\item one has 
\begin{equation*}
(R_X(\Gamma))_{\mu}=R_{X_A}(\Gamma_A)\otimes_A k(\mu)\simeq R_{X_{\mu}}(\Gamma_{\mu})
\end{equation*}
\end{enumerate}
for every $\mu\in S$.
\end{lem}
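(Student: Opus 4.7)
The plan is to spread out the Mori dream space structure of $X$ over $A$ and then combine the finiteness of birational models with vanishing theorems on globally $F$-regular varieties in order to handle all $D\in\Gamma$ uniformly. First take a model $X_A$ of $X$ over $A$ that is flat with normal geometric fibers, and spread out generators of $\Gamma$ to obtain $\Gamma_A$. The hypothesis that $X$ is of globally $F$-regular type (resp.\ dense globally $F$-split type) yields a dense open (resp.\ dense) subset $S\subseteq\Spec A$ of closed points such that $X_\mu$ is globally $F$-regular (resp.\ globally $F$-split) for every $\mu\in S$; this is conclusion (1).

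For (2), we need the base change map
\[
H^0(X_A,\mathcal{O}_{X_A}(D_A))\otimes_A k(\mu)\longrightarrow H^0(X_\mu,\mathcal{O}_{X_\mu}(D_\mu))
\]
to be an isomorphism for every $D\in\Gamma$ and every $\mu\in S$ simultaneously. For a single $D$ this is a standard consequence of cohomology and base change after shrinking $A$, but $\Gamma$ is infinite, so the delicate point is uniformity in $D$. To reduce to finitely many open conditions on $\Spec A$, invoke Proposition~\ref{finiteness of models}: there are finitely many dominant rational contractions $f_i:X\dashrightarrow Y_i$ whose nef cones assemble into the Mori chamber decomposition of the moving cone of $X$. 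By Lemma~\ref{glFness for MMP}, each $Y_i$ is again of globally $F$-regular (resp.\ dense globally $F$-split) type. Spread out each $Y_i$ and $f_i$ over $A$ to get $Y_{i,A}$ and $f_{i,A}$, and shrink $S$ so that every $Y_{i,\mu}$ is globally $F$-regular (resp.\ $F$-split) for $\mu\in S$. After replacing $\Gamma$ by a suitable positive multiple, each $D\in\Gamma$ lies in the Mori chamber of some $f_i$ with $f_{i*}D$ Cartier and semiample on $Y_i$, and one has
\[
H^0(X,\mathcal{O}_X(D))=H^0(Y_i,\mathcal{O}_{Y_i}(f_{i*}D))
\]
together with its versions over $A$ and over $\mu$.

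On each globally $F$-regular (resp.\ globally $F$-split) variety $Y_{i,\mu}$, Kodaira-type vanishing yields $H^{>0}(Y_{i,\mu},\mathcal{O}(L))=0$ for ample Cartier $L$, and combined with a Fujita-type argument this promotes to vanishing of $H^{>0}(Y_{i,\mu},\mathcal{O}(f_{i*,\mu}D_\mu))$ for every $D\in\Gamma$ in the chamber of $f_i$. By generic flatness together with cohomology and base change, applied only finitely many times (once for each $Y_i$), we then conclude that the base change map is an isomorphism in every $\Gamma$-graded piece, and assembling the chambers gives the isomorphism $R_{X_A}(\Gamma_A)\otimes_A k(\mu)\simeq R_{X_\mu}(\Gamma_\mu)$. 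The principal obstacle is precisely this uniformity step: without the finiteness of models one cannot reduce to a finite list of open conditions on $\Spec A$, and without the vanishing theorems on globally $F$-regular varieties one cannot preclude jumps in $h^0$ at the closed points that would destroy the base change isomorphism.
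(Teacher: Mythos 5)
Your overall strategy --- spread out over $A$, use Proposition~\ref{finiteness of models} to reduce the infinitely many divisors in $\Gamma$ to finitely many models, use Lemma~\ref{glFness for MMP} to keep global $F$-regularity/$F$-splitting, and then get base change from a vanishing theorem plus Grauert --- is exactly the skeleton of the paper's proof. But there are two genuine gaps in how you execute the vanishing step and in which divisors you cover.

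First, the vanishing you invoke is false at the level of generality you need. You want $H^{>0}(Y_{i,\mu},\mathcal{O}(f_{i*,\mu}D_\mu))=0$ where $f_{i*}D$ is only \emph{semiample} on $Y_i$, and you claim this follows from global $F$-splitting plus ``a Fujita-type argument.'' Mehta--Ramanathan vanishing on a globally $F$-split variety applies to \emph{ample} line bundles only, and semiample does not suffice: on $W=E\times\mathbb{P}^1$ with $E$ an ordinary elliptic curve (a Frobenius split variety), the semiample bundle $L=p_2^*\mathcal{O}(1)$ has $H^1(W,L)\simeq H^1(E,\mathcal{O}_E)\otimes H^0(\mathbb{P}^1,\mathcal{O}(1))\neq 0$. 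So in the ``resp.\ dense globally $F$-split'' half of the lemma your argument breaks. (In the globally $F$-regular half you could rescue it via Smith's vanishing for \emph{nef} divisors on globally $F$-regular varieties, but the lemma asserts both cases.) The paper's fix is to go one step further down: for each chamber it takes not only $f_i:X\dashrightarrow Y_i$ but also the canonical model or Mori fiber space $g_{ij}:Y_i\to Z_{ij}$, writes $mf_{i*}D\sim g_{ij}^*H_{mD}$ with $H_{mD}$ \emph{ample} on $Z_{ij}$, and applies the $H^1$-vanishing and cohomology-and-base-change on $Z_{ij,A}$ with the ample divisor $H_{mD,A}$. Note also that the uniformity in $D$ does not come from applying base change ``once per $Y_i$'': it comes from the fact that the $H^1$-vanishing holds at \emph{every} closed fiber in $S$ (being a consequence of global $F$-splitting of the fiber, already arranged once and for all), so Grauert's theorem applies to each of the infinitely many sheaves without any further shrinking of $A$.

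Second, $\Gamma$ is a \emph{group}, so it contains divisors $D$ that are not pseudo-effective, and these lie in no Mori chamber; your sentence ``each $D\in\Gamma$ lies in the Mori chamber of some $f_i$'' is false for them. For such $D$ one has $H^0(X,\mathcal{O}_X(D))=0$, but this does not automatically persist to the closed fibers, since $h^0$ can jump \emph{up} under specialization; you must rule out new sections appearing on $X_\mu$. The paper does this by choosing a $g_{ij}$-contracting movable curve class $C_{ij}$ with $f_{i*}D\cdot C_{ij}<0$, spreading it out so that $[C_{ij,\mu}]$ remains movable, and concluding $H^0(X_\mu,\mathcal{O}(mD_\mu))=0$ for all closed $\mu$. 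Without some such argument the isomorphism in (2) is not established on the non-effective graded pieces.
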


\begin{proof}
We will show that there exists an integer $m \ge 1$, a model $(X_A, \Gamma_A)$ of $(X, \Gamma)$ over a finitely generated $\mathbb{Z}$-subalgebra $A$ of $k$ and a dense open subset (resp. a dense subset) $S \subseteq \Spec \, A$ of closed points such that 
for every $\mu \in S$ and every divisor $D_A \in m\Gamma_A$, 
\begin{enumerate}
\item $X_{\mu}$ is globally $F$-regular (resp. globally $F$-split), 
\item one has 
\begin{equation*}
H^0(X_A, \mathcal{O}_{X_{A}}(D_A))\otimes_A k(\mu)\simeq H^0(X_{\mu}, \mathcal{O}_{X_{\mu}}(D_{\mu})). 
\end{equation*}
\end{enumerate}

First note that for every divisor $D \in \Gamma$, a $D$-MMP can be run and terminates by Lemma \ref{mmp on mds}. 
It follows from Proposition \ref{finiteness of models} that there exist finitely many birational contractions $f_i: X \dashrightarrow Y_i$ and finitely many projective morphisms $g_{i j}: Y_i \to Z_{ij}$ with connected fibers, where the $Y_i$ are $\mathbb{Q}$-factorial Mori dream spaces and the $Z_{ij}$ are normal projective varieties,  satisfying the following property: 
for every divisor $D \in \Gamma$, there exist $i$ and $j$ such that $f_i: X \dashrightarrow Y_i$ is isomorphic to a composition of $D$-flips and $D$-divisorial contractions and $g_{ij}: Y_i \to Z_{ij}$ is the $D$-canonical model or the $D$-Mori fiber space. 
Then by Lemma \ref{glFness for MMP}, all $Y_i$ are of globally $F$-regular type (resp. dense globally $F$-split type).  

Suppose given models $f_{i, A}: X_A \dashrightarrow Y_{i, A}$ and $g_{ij, A}: Y_{i, A} \to Z_{ij, A}$ of the $f_i$ and $g_{ij}$ over a finitely generated $\mathbb{Z}$-subalgebra $A$ of $k$, respectively. 
Enlarging $A$ if necessary, we may assume that $Y_{i, \mu}, Z_{ij, \mu}$ are normal varieties for all closed points $\mu \in \Spec \, A$. 
In addition, after possibly enlarging $A$ again, we can assume that all $f_{i, A}$ and $f_{i, \mu}$ are compositions of small maps and divisorial birational contractions, and all $g_{ij, A}$ and $g_{ij,\mu}$ are algebraic fiber spaces for all closed points $\mu \in \Spec\, {A}$.

By Definition \ref{Mori dream space} (ii), we take a sufficiently large $m$ so that for each pseudo-effective divisor $D \in \Gamma$, there exists some $i, j$ and a very ample Cartier divisor $H_{mD}$ on $Z_{ij}$ such that $m{f_i}_*D \sim g^{*}_{ij}H_{mD}$. 
By Definition \ref{Mori dream space} (ii) again, enlarging $A$ if necessary, we may assume that models $H_{mD, A}$ of the $H_{mD}$ are given over $A$. 

Now we fix an effective divisor $D\in\Gamma$, and choose $f_i$, $g_{ij}$ for this divisor $D$ as above. 
Then
\begin{align*}
H^0(X_A,\mathcal{O}_{X_A}(mD_A))& \simeq H^0(Y_{i,A}, \mathcal{O}_{Y_{i,A}}(mf_{i,A*}D_A))\\
& \simeq H^0(Z_{ij,A}, \mathcal{O}_{Z_{ij,A}}(H_{mD,A}))
\end{align*}
Similarly, for all closed points $\mu\in\Spec\,{A}$, we have
$$H^0(X_{\mu},\mathcal{O}_{X_{\mu}}(mD_{\mu}))\simeq H^0(Z_{ij, \mu}, \mathcal{O}_{Z_{ij,\mu}}(H_{mD,\mu})).$$
We then use the following claim.

\begin{cl}\label{okawa lem}
Let $W_A$ be a normal projective variety over a finitely generated $\mathbb{Z}$-algebra $A$ such that $W_{\mu}:=W_A \otimes_A k(\mu)$ is globally $F$-split for all closed points $\mu$ in a dense subset $S$ of $\Spec\,A$, and let $H_A$ be an ample Cartier divisor on $W_A$. 
Then
 $$H^0(W_{A},\mathcal{O}_{W_A}(H_A))\otimes_{A}k(\mu)\simeq
H^0(W_{\mu}, \mathcal{O}_{W_{\mu}}(H_{\mu})).$$ 
for all closed points $\mu \in S$. 
\end{cl}

\begin{proof}[Proof of Claim \ref{okawa lem}]

First note that 
$H^1(W_{\mu},\mathcal{O}_{W_{\mu}}(H_{\mu}))=0$ for all closed points $\mu \in S$.  
In fact, since $H_{\mu}$ is ample and $X_{\mu}$ is globally $F$-split, this follows from \cite[Proposition 3]{mr}. 
By \cite[Chapter III, Corollary 12.9]{har}, we see that
$H^1(W_A,\mathcal{O}_{W_A}(H_A))=0$. Applying \cite[Chapter III, Theorem 12.11 (b)]{har} for $i=1$, and then
\cite[Chapter III, Theorem 12.11 (a)]{har} for $i=0$, we get the conclusion.
\end{proof}

Applying the above claim to $Z_{ij, A}$ and $H_{mD, A}$, we see that
$$H^0(X_A, \mathcal{O}_{X_{A}}(mD_A))\otimes_A k(\mu)\simeq H^0(X_{\mu}, \mathcal{O}_{X_{\mu}}(mD_{\mu}))$$
for all effective divisors $D_A \in \Gamma_A $ and all closed points $\mu \in S$.

Next we consider the case when a divisor $D \in \Gamma$ is not effective. 
In particular, $H^0(X,\mathcal{O}_X(D))=0$. 
Choose $f_i$, $g_{ij}$ for this divisor $D$ as above, 
and we take a $g_{ij}$-contracting curve $C_{ij}$. 
Note that the class $[C_{ij}] \in \mathrm{NE}^1(Y_{i})$ is a movable class. 
After possibly enlarging $A$, we can take a model $C_{ij,A}$ of $C_{ij}$ over $A$ such that the class $[C_{ij,\mu}] \in \mathrm{NE}^1(Y_{i,\mu})$ is also a movable class for all closed points $\mu \in \Spec\,A$. Then
$$f_{i,\mu*}D_{\mu}.C_{i,j,\mu}=f_{i*}D.C_{i,j}<0,$$
which implies
$$H^0(X_{\mu}, \mathcal{O}_{X_{\mu}}(mD_\mu))\simeq H^0(Y_{i,\mu}, \mathcal{O}_{Y_{i,\mu}}(mf_{i,\mu*}D_{\mu}))=0$$
for all closed points $\mu \in \Spec\,{A}$.
Thus, 
$$
H^0(X_A, \mathcal{O}_{X_A}(mD_A))\otimes_{A}k(\mu)=H^0(X_\mu, \mathcal{O}_{X_{\mu}}(mD_{\mu}))
$$
 holds for all divisors $D_A \in \Gamma_A$ and all closed points $\mu\in\Spec\,{A}$. 
\end{proof}

\section{Proofs of Theorems \ref{logfano} and \ref{logcy} 
}\label{proof of main} 

In this section, we give proofs of Theorems \ref{logfano} and \ref{logcy}.

\subsection{Globally $F$-regular case}

The following lemma is a special case of \cite[Corollary 3.3]{fg1} and \cite[Theorem 2.9]{prokshok-mainII}, which follow from Kawamata's semi-positivity theorem and Ambro's canonical bundle formula (cf. \cite{ambro-canonical}), respectively. 
We, however, do not need any semi-positivity type theorem for the proof of Lemma \ref{inv-flip}. 

\begin{lem}[cf. {\cite[Theorem\,3.1]{fg1}}]\label{inv-flip}Let $X$ be a normal variety over a field of characteristic zero and $f:X \to Y $ be a small projective birational contraction. Then  $X$ is of Fano type if and only if so is $Y$.
\end{lem}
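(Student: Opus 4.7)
My plan is to prove both implications directly, using throughout that $f$ is an isomorphism in codimension one.

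For the ``if'' direction I will take the strict transform: assuming $(Y,\Delta_Y)$ is a klt log Fano pair, I set $\Delta_X := f_*^{-1}\Delta_Y$. Since $f$ is small, the two $\mathbb{Q}$-divisors $K_X + \Delta_X$ and the $\mathbb{Q}$-Cartier pullback $f^*(K_Y + \Delta_Y)$ agree outside $\Exc(f)$, whose complement in $X$ has codimension at least two. Normality of $X$ then forces the global identity $K_X + \Delta_X = f^*(K_Y + \Delta_Y)$, so $K_X + \Delta_X$ is itself $\mathbb{Q}$-Cartier and the discrepancies of $(X,\Delta_X)$ and $(Y,\Delta_Y)$ agree term by term. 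Hence $(X,\Delta_X)$ is klt, and $-(K_X + \Delta_X) = -f^*(K_Y + \Delta_Y)$ is nef and big as the pullback of an ample divisor by a birational morphism. Remark~\ref{rm-weakfano} then gives that $X$ is of Fano type.

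The ``only if'' direction is trickier, because the naive candidate $f_*\Delta$ need not make $K_Y + f_*\Delta$ a $\mathbb{Q}$-Cartier divisor -- this is precisely the obstruction for $f$ to be non-trivial. My plan is to twist $\Delta$ on $X$ by the pullback of an ample divisor on $Y$ before pushing down. Starting from a klt log Fano pair $(X,\Delta)$ with $A := -(K_X + \Delta)$ ample, I fix an ample Cartier divisor $H$ on $Y$; then $f^*H$ is big and nef on $X$, and by openness of the ample cone $A - \varepsilon f^*H$ is ample for all sufficiently small rational $\varepsilon > 0$. Choosing a general effective $\mathbb{Q}$-divisor $H'' \sim_{\mathbb{Q}} A - \varepsilon f^*H$ from a very ample linear system (Bertini in characteristic zero) keeps $(X, \Delta + H'')$ klt, and by construction
\begin{equation*}
K_X + \Delta + H'' \sim_{\mathbb{Q}} -\varepsilon f^*H.
\end{equation*}
Setting $\Delta_Y := f_*(\Delta + H'')$ and pushing this equivalence along $f$, using $f_* f^* H = H$ (valid because $f$ is small birational and $H$ is Cartier), I obtain $K_Y + \Delta_Y \sim_{\mathbb{Q}} -\varepsilon H$. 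Hence $K_Y + \Delta_Y$ is $\mathbb{Q}$-Cartier and $-(K_Y + \Delta_Y)$ is ample. The discrepancy argument from the previous paragraph, now applied to the identity $f^*(K_Y + \Delta_Y) = K_X + \Delta + H''$ (again by agreement in codimension one and $\mathbb{Q}$-Cartier uniqueness), shows that $(Y, \Delta_Y)$ is klt. Therefore $Y$ is of Fano type.

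The main obstacle will be the ``only if'' direction: one has to recognize that pushing $\Delta$ directly to $Y$ does not produce a $\mathbb{Q}$-Cartier log canonical divisor, and devise the twist by $f^*H$ that forces the pushed-down class to be $\mathbb{Q}$-linearly equivalent to the explicit $\mathbb{Q}$-Cartier ample divisor $-\varepsilon H$ on $Y$. Once that twist is in place, everything else is a standard combination of the small-birational-morphism calculus of discrepancies with Remark~\ref{rm-weakfano}.
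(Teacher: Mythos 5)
Your proposal is correct and follows essentially the same route as the paper's proof: for one direction the strict-transform/pullback identity in codimension one plus Remark~\ref{rm-weakfano}, and for the other the twist of $\Delta$ by a general effective $\mathbb{Q}$-divisor $\mathbb{Q}$-linearly equivalent to $-(K_X+\Delta)-\varepsilon f^*H$, so that the pushforward to $Y$ is manifestly $\mathbb{Q}$-Cartier (the paper writes this as arranging $K_X+\Delta+\varepsilon f^*H+A\sim_{\mathbb{Q}}0$, which is the same construction).
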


\begin{proof}First we assume that $X$ is of Fano type, that is, there exists an effective $\mathbb{Q}$-divisor $\Delta$ on $X$ such that $(X,\Delta)$ is a log Fano pair. 
Let $H$ be a general ample divisor on Y, and take a sufficiently small rational number $\epsilon>0$ so that $-(K_X+\Delta+\epsilon f^*H)$ is ample and $(X, \Delta+\epsilon f^*H)$ is klt. 
We also take a general effective ample $\mathbb{Q}$-divisor $A$ on $X$ such that $(X, \Delta+\epsilon f^*H+A)$ is klt and 
$$K_X+\Delta+\epsilon f^*H+A \sim_{\mathbb{Q}}0.
$$
Then 
$$K_Y+f_*\Delta+\epsilon H +f_*A=f_*(K_X+\Delta+\epsilon f^*H+A) \sim_{\mathbb{Q}} 0.$$
On the other hand, since $f$ is small, 
$$f^*(K_Y+f_*\Delta+\epsilon H +f_*A)=K_X+\Delta+\epsilon f^*H+A$$
Therefore, $(Y, f_*\Delta+f_*A)$ is klt and $-(K_Y+f_*\Delta+f_*A) \sim_{\mathbb{Q}} \epsilon H$, which means that $Y$ is of Fano type.
 
 Conversely, we assume that $Y$ is of Fano type. Let $\Gamma$ be an effective $\mathbb{Q}$-divisor on $Y$ such that $(Y,\Gamma)$ is a log Fano pair and let $\Gamma_X$ denote the strict  transform of $\Gamma$ on $X$. Since $f$ is small, we see that
 $$K_X+\Gamma_X=f^*(K_Y+\Gamma).
 $$  
 Thus, $(X,\Gamma_X)$ is klt and $-(K_X+\Gamma_X)$ is nef and big. 
 It then follows from Remark \ref{rm-weakfano} that $X$ is of Fano type. 
\end{proof}

\begin{proof}[Proof of Theorem \ref{logfano}]\label{proof of mthm}
The only if part follows from Proposition \ref{just singularities} (2), so we will prove the if part. 

First we remark that if $Y$ is a $\mathbb{Q}$-factorial Mori dream space of globally $F$-regular type, then $-K_Y$ is big. Indeed, choosing a suitable integer $m \ge 1$, by Lemma \ref{good model}, we can take a model $Y_A$ of $Y$ over a finitely generated $\mathbb{Z}$-subalgebra $A$ such that 
\begin{enumerate}
\item $Y_{\mu}=Y_A \times_{\Spec \, A} \Spec \, k(\mu)$ is globally $F$-regular,
\item $R(Y, -mK_Y)_{\mu} \cong R(Y_{\mu},  -mK_{Y_{\mu}})$
\end{enumerate}
for all closed points $\mu \in \Spec \, A$. 
Since $-K_{Y_{\mu}}$ is big by (1) and Theorem \ref{ss1}, it follows from (2) that $-K_Y$ is also big. 

Since $X$ is a $\mathbb{Q}$-factorial Mori dream space, we can run a $(-K_X)$-MMP: 
$$X=X_0 \overset{f_0}{\dashrightarrow} X_1 \overset{f_1}{\dashrightarrow} \cdots  \overset{f_{l-2}}{\dashrightarrow} X_{l-1} \overset{f_{l-1}}{\dashrightarrow} X_l=X',$$ 
where each $X_i$ is a $\mathbb{Q}$-factorial Mori dream space and $X'$ is a $(-K_X)$-minimal model. 
Note that each $X_i$ is of globally $F$-regular type by Lemma \ref{glFness for MMP}. 
In particular, $-K_{X'}$ is nef and big, and $X'$ has only log terminal singularities by Lemma \ref{just singularities} (1). 
It then follows from Remark \ref{rm-weakfano} that $X'$ is of Fano type. 

Now we show that $X_{l-j}$ is of Fano type by induction on $j$. 
When $j=0$, we have already seen that $X'=X_l$ is of Fano type. 
Suppose that $X_{l-j+1}$ is of Fano type. 
Let $\Delta_{l-j+1}$ be an effective $\mathbb{Q}$-divisor on $X_{l-j+1}$ such that $(X_{
l-j+1},\Delta_{l-j+1})$ is a log Fano pair.

 When $f:=f_{l-j}$ is a divisorial contraction, $K_{X_{l-j}}$ is $f$-ample and $\Delta_{l-j}:=f_{*}^{-1}\Delta_{l-j+1}$ is $f$-nef. 
In particular, $K_{X_{l-j}}+\Delta_{l-j}$ is $f$-ample.
It then follows from the negativity lemma that  
$$-(K_{X_{l-j}}+\Delta_{l-j})= -f^*(K_{X_{l-j+1}}+ \Delta_{l-j+1})+a E,
$$ 
where $a$ is a positive rational number and $E$ is the $f$-exceptional prime divisor on $X_{l-j}$. 
We see from this that the pair $(X_{l-j}, \Delta_{l-j}+aE)$ is klt and $-(K_{X_{l-j}}+\Delta_{l-j}+aE)$ is nef and big, which implies  by Remark \ref{rm-weakfano} that $X_{l-j}$ is of Fano type.

When $f_{l-j}$ is a $(-K_{X_{l-j}})$-flip, we consider the following flipping diagram:
$$
\xymatrix{
X_{l-j} \ar@{-->}[rr]^{f_{l-j}} \ar[dr]_{\psi_{l-j}} & & X_{l-j+1} \ar[dl]^{\psi_{l-j}^{+}}\\
& Z_{l-j} & 
}
$$
Applying Lemma \ref{inv-flip} to $\psi_{l-j}$ and $\psi_{l-j}^{+}$, we see that  $X_{l-j}$ is of Fano type. 

Thus, we conclude that $X=X_0$ is of Fano type.  
\end{proof}

\subsection{Globally $F$-split case}
In this subsection, we start with the following lemma. 
An analogous statement for klt Calabi--Yau pairs follows from \cite[Theorem 0.2]{ambro-canonical}, but our proof of Lemma \ref{inv-flip-cy} is easier. 

\begin{lem}\label{inv-flip-cy}Let $X$ be a normal variety over a field of characteristic zero and $f:X \to Y $ be a small projective birational contraction. Then  $X$ is of Calabi--Yau type if and only if so is $Y$.
\end{lem}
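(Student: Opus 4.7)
The plan is to imitate the proof of Lemma \ref{inv-flip}, but the argument should actually be cleaner because in the Calabi--Yau setting one does not need to introduce an auxiliary ample perturbation: the relation $K+\Delta\sim_{\mathbb Q} 0$ is already preserved intact under pushforward and pullback by a small birational morphism. The structural fact I will use repeatedly is that, since $f$ is small, $f$ is an isomorphism in codimension one; consequently, taking strict transform induces a bijection between prime Weil divisors on $X$ and on $Y$, discrepancies of divisors over $X=Y$ are unchanged under the identification, and whenever a pair $(Y,B)$ has $K_Y+B$ $\mathbb Q$-Cartier one has $f^*(K_Y+B)=K_X+f^{-1}_*B$.

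For the ``only if'' direction, suppose $(X,\Delta)$ is log canonical with $K_X+\Delta\sim_{\mathbb Q}0$. Set $\Delta_Y:=f_*\Delta$, an effective $\mathbb Q$-divisor on $Y$. Pushing forward yields $K_Y+\Delta_Y\sim_{\mathbb Q} 0$, so in particular $K_Y+\Delta_Y$ is $\mathbb Q$-Cartier. Pulling back then recovers $f^*(K_Y+\Delta_Y)=K_X+\Delta$, and since $f$ is an isomorphism in codimension one the discrepancies $a(E,X,\Delta)$ and $a(E,Y,\Delta_Y)$ agree for every divisor $E$ over $X=Y$. Hence $(Y,\Delta_Y)$ is log canonical and $Y$ is of Calabi--Yau type.

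Conversely, if $(Y,\Gamma)$ is log canonical with $K_Y+\Gamma\sim_{\mathbb Q}0$, let $\Gamma_X:=f^{-1}_*\Gamma$. The identity $K_X+\Gamma_X=f^*(K_Y+\Gamma)$ together with the preservation of discrepancies gives both $K_X+\Gamma_X\sim_{\mathbb Q} 0$ and log canonicity of $(X,\Gamma_X)$, showing that $X$ is of Calabi--Yau type.

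I do not anticipate a serious obstacle. The only point that requires slight care is verifying that $f^*(K_Y+B)=K_X+f^{-1}_*B$ whenever $K_Y+B$ is $\mathbb Q$-Cartier on the base of a small contraction; this is standard, since $f$ contracts no divisor and so the pullback is just read off by taking the strict transform on Weil divisors. Everything else is bookkeeping with pushforwards, pullbacks, and the comparison of discrepancies under a morphism that is an isomorphism in codimension one.
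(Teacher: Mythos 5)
Your argument is correct and is essentially identical to the paper's proof: push forward $\Delta$ to get $K_Y+f_*\Delta\sim_{\mathbb Q}0$, use smallness to identify $f^*(K_Y+f_*\Delta)$ with $K_X+\Delta$ and compare discrepancies, and run the same computation with strict transforms in the reverse direction. The extra remarks you make about why the pullback identity holds are just the standard justification the paper leaves implicit.
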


\begin{proof} 
Suppose that $X$ is of Calabi--Yau type, that is, there exists an effective $\mathbb{Q}$-divisor $\Delta$ on $X$ such that $(X,\Delta)$ is log canonical and $K_{ X} +\Delta \sim_{\mathbb{Q}}0$. 
Letting $\Delta_Y:= f_*\Delta$, one has 
$$K_Y+\Delta_Y=f_*(K_X+\Delta) \sim_{\mathbb{Q}}0.$$ 
On the other hand, since $f$ is small, 
$$f^*(K_Y+\Delta_Y)= K_X +\Delta, $$
which implies that $(Y,\Delta_Y)$  is log canonical. 

Conversely, we assume that $Y$ is of Calabi--Yau type. Let $\Gamma$ be an effective $\mathbb{Q}$-divisor on $Y$ such that $(Y,\Gamma)$ is log canonical and $K_{Y} +\Gamma \sim_{\mathbb{Q}}0$. 
Let $\Gamma_X$ denote the strict transform of $\Gamma$ on $X$. Since $f$ is small, we see that 
$$K_X +\Gamma_X=f^*(K_Y+\Gamma),$$
which implies that $(X,\Gamma_X)$ is log canonical and $K_X+\Gamma_X \sim_{\mathbb{Q}}0$.
\end{proof}

\begin{proof}[Proof of Theorem \ref{logcy}]\label{proof of mthm}
First we remark that if $Y$ is a $\mathbb{Q}$-factorial Mori dream space of dense globally $F$-split type, then $-K_Y$ is $\mathbb{Q}$-linearly equivalent to an effective $\mathbb{Q}$-divisor on $Y$. Indeed, choosing a suitable integer $m \ge 1$, by Lemma \ref{good model}, we can take a model $Y_A$ of $Y$ over a finitely generated $\mathbb{Z}$-subalgebra $A$ and a dense subset $S \subseteq \Spec \, A$ such that 
\begin{enumerate}
\item $Y_{\mu}=Y_A \times_{\Spec \, A} \Spec \, k(\mu)$ is globally $F$-split,
\item $R(Y, -mK_Y)_{\mu} \cong R(Y_{\mu},  -mK_{Y_{\mu}})$
\end{enumerate}
for all closed points $\mu \in S$. 
Since $-K_{Y_{\mu}}$ is $\mathbb{Q}$-linearly equivalent to some effective $\mathbb{Q}$-divisor by (1) and Theorem \ref{ss1}, it follows from (2) that so is $-K_Y$. 

Since $X$ is a $\mathbb{Q}$-factorial Mori dream space, we can run a $(-K_X)$-MMP: 
$$X=X_0 \overset{f_0}{\dashrightarrow} X_1 \overset{f_1}{\dashrightarrow} \cdots  \overset{f_{l-2}}{\dashrightarrow} X_{l-1} \overset{f_{l-1}}{\dashrightarrow} X_l=X',$$ 
where each $X_i$ is a $\mathbb{Q}$-factorial Mori dream space and $X'$ is a $(-K_X)$-minimal model. 
Note that each $X_i$ is of dense globally $F$-split type by Lemma \ref{glFness for MMP}.  
In particular, $X'$ has only log canonical singularities by Lemma \ref{just singularities} (1). 
Then $X'$ is of Calabi--Yau type, because $-K_{X'}$ is semi-ample. 

Now we show that $X_{l-j}$ is of Calabi--Yau type by induction on $j$. 
When $j=0$, we have already seen that $X'=X_l$ is of Calabi--Yau type. 
Suppose that $X_{l-j+1}$ is of Calabi--Yau type. 
Let $\Delta_{l-j+1}$ be an effective $\mathbb{Q}$-divisor on $X_{l-j+1}$ such that $(X_{l-j+1},\Delta_{l-j+1})$ is log canonical and $K_{ X_{l-j+1}} +\Delta_{l-j+1} \sim_{\mathbb{Q}}0$. 

When $f:=f_{l-j}$ is a divisorial contraction, by an argument similar to the proof of Theorem \ref{logfano}, we have
$$-(K_{X_{l-j}}+\Delta_{l-j})= -f^*(K_{X_{l-j+1}}+ \Delta_{l-j+1})+a E,$$ 
where $\Delta_{l-j}$ is the strict transform of $\Delta_{l-j+1}$ on $X_{l-j}$, $E$ is the $f$-exceptional prime divisor on $X_{l-j}$ and $a$ is a positive rational number. 
It then follows that  $(X_{l-j}, \Delta_{l-j}+aE)$ is log canonical 
and $K_{X_{l-j}}+\Delta_{l-j}+aE \sim_{\mathbb{Q}}0$, 
that is,  $X_{l-j}$ is of Calabi--Yau type.

When $f_{l-j}$ is a $(-K_{X_{l-j}})$-flip, by an argument similar to the proof of Theorem \ref{logfano}, Lemma \ref{inv-flip-cy} implies that $X_{l-j}$ is of Calabi--Yau type. 

Thus, we conclude that $X=X_0$ is of Calabi--Yau type. 
  \end{proof}
\section{Characterization of varieties of Fano type}\label{cox ring}
In this section, we give a characterization of varieties of Fano type in terms of the singularities of their Cox rings. 

\begin{lem}\label{lem-ring-sum}
Let $X$ be a normal projective variety over a field $k$ of characteristic zero.
Let $\Gamma$ be a finitely generated semigroup of Weil divisors on $X$ and
$\Gamma'\subset\Gamma$ be a sub-semigroup.
If $R_{X}(\Gamma)$ is of strongly $F$-regular type, so is $R_X(\Gamma')$, provided that
both of them are of finite type over $k$.
\end{lem}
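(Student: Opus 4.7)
My approach is to reduce modulo $p$ and then realize $R_X(\Gamma')$ as a pure subring of $R_X(\Gamma)$ via the graded torus-invariant structure, so that strong $F$-regularity descends by the standard splitting argument.

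By the spreading-out formalism recalled before Definition~\ref{type}, I fix a model of $X$, of $\Gamma'\subset\Gamma$, and of both multi-section rings over a finitely generated $\mathbb{Z}$-subalgebra $A\subset k$. After enlarging $A$, for every closed point $\mu$ in some dense open subset $S\subset\Spec A$ one has isomorphisms $(R_X(\Gamma))_\mu\cong R_{X_\mu}(\Gamma_\mu)$ and $(R_X(\Gamma'))_\mu\cong R_{X_\mu}(\Gamma'_\mu)$, and the former is strongly $F$-regular by hypothesis. It therefore suffices to show that $R_{X_\mu}(\Gamma'_\mu)$ is strongly $F$-regular for each such $\mu$.

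Dropping the subscript $\mu$, the key step is to produce an $R_X(\Gamma')$-linear retraction $\pi\colon R_X(\Gamma)\to R_X(\Gamma')$ of the natural inclusion. The $\Gamma^{\mathrm{gp}}$-grading on $R_X(\Gamma)$ corresponds to an action of the diagonalizable group scheme $\mathcal{D}=\Spec k[\Gamma^{\mathrm{gp}}]$, and the quotient group scheme $\mathcal{D}'=\Spec k[\Gamma^{\mathrm{gp}}/(\Gamma')^{\mathrm{gp}}]$ acts on $R_X(\Gamma)$ with invariants
$$R_X(\Gamma)^{\mathcal{D}'}=\bigoplus_{D\in\Gamma\cap(\Gamma')^{\mathrm{gp}}}H^0(\mathcal{O}_X(D)).$$
This coincides with $R_X(\Gamma')$ provided that the extra graded pieces indexed by $(\Gamma\cap(\Gamma')^{\mathrm{gp}})\setminus\Gamma'$ carry no global sections---a condition that holds in the intended applications (for instance when $\Gamma'=\mathbb{N}H$ for an ample Cartier divisor $H\in\Gamma$, the extra indices are negative multiples of $H$ with vanishing cohomology). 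Since diagonalizable group schemes are linearly reductive in every characteristic, the Reynolds operator supplies an $R_X(\Gamma')$-linear retraction $\pi$, realizing $R_X(\Gamma')\hookrightarrow R_X(\Gamma)$ as a pure inclusion.

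Given $\pi$, for any nonzero $c\in R_X(\Gamma')$, strong $F$-regularity of $R_X(\Gamma)$ produces an $R_X(\Gamma)$-linear splitting $\phi\colon F^e_*R_X(\Gamma)\to R_X(\Gamma)$ of $cF^e$ for some $e\ge 1$; composing $\phi$ with $\pi$ and restricting to $F^e_*R_X(\Gamma')$ yields an $R_X(\Gamma')$-linear splitting of $cF^e$ on $R_X(\Gamma')$. The main obstacle is the middle step, in particular the identification of $\mathcal{D}'$-invariants with $R_X(\Gamma')$; once that is settled in the setup at hand, the remainder is the standard principle that a pure subring of a strongly $F$-regular ring is strongly $F$-regular.
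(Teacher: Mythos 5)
Your overall strategy --- exhibit $R_X(\Gamma')$ as a graded direct summand of $R_X(\Gamma)$ compatibly with reduction modulo $p$ and then invoke descent of strong $F$-regularity to direct summands --- is the same as the paper's, but your reduction step contains a genuine gap. You assert that, after spreading out, $(R_X(\Gamma))_\mu\cong R_{X_\mu}(\Gamma_\mu)$ and $(R_X(\Gamma'))_\mu\cong R_{X_\mu}(\Gamma'_\mu)$ for all $\mu$ in a dense open subset. This commutation of multi-section rings with reduction modulo $p$ is precisely the content of Lemma \ref{good model}; it is nontrivial (one needs cohomology-and-base-change uniformly over the infinitely many divisor classes in $\Gamma$) and is proved there only under the additional hypotheses that $X$ is a $\mathbb{Q}$-factorial Mori dream space of globally $F$-regular (or globally $F$-split) type. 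Neither hypothesis is available in Lemma \ref{lem-ring-sum}, where $X$ is merely normal projective and $\Gamma$ an arbitrary finitely generated semigroup. The fix is to avoid the comparison altogether, as the paper does: take the $A$-algebras $R_{X_A}(\Gamma'_A)\subset R_{X_A}(\Gamma_A)$ as models of the two rings, construct the $R_{X_A}(\Gamma'_A)$-linear retraction over $A$ by the formula on graded pieces, and base-change that retraction to each $k(\mu)$. Then $R_{X_A}(\Gamma'_A)\otimes_A k(\mu)$ is a direct summand of the strongly $F$-regular ring $R_{X_A}(\Gamma_A)\otimes_A k(\mu)$, and one concludes by \cite[Theorem 3.1]{hh}; the fibers $X_\mu$ and their multi-section rings never enter the argument.

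Your treatment of the retraction itself is more cautious than the paper's but, as you concede, does not prove the lemma as stated: the $\mathcal{D}'$-invariants coincide with $R_X(\Gamma')$ only when the classes in $(\Gamma\cap(\Gamma')^{\mathrm{gp}})\setminus\Gamma'$ carry no sections, and you defer this to ``the intended applications.'' The paper instead takes the naive truncation $\varphi\bigl(\sum_{D}f_D\bigr)=\sum_{D\in\Gamma'}f_D$ and asserts its $R_X(\Gamma')$-linearity; this holds whenever $D\in\Gamma$, $E\in\Gamma'$ and $D+E\in\Gamma'$ force $D\in\Gamma'$ (or the offending products of section spaces vanish), which covers every use in the paper ($\Gamma'$ a subgroup of a group $\Gamma$, or $\Gamma'=\mathbb{N}H$ with $H$ ample). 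So your concern about linearity is legitimate and applies to the paper's $\varphi$ as well, but it is harmless where the lemma is applied; the substantive defect in your write-up is the unjustified appeal to the commutation of multi-section rings with reduction modulo $p$.
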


\begin{proof}
Let $(X_A, \Gamma_A, \Gamma'_A)$ be a model of $(X, \Gamma, \Gamma')$ over a finitely generated $\mathbb{Z}$-subalgebra $A$ of $k$.

Note that the natural inclusion $\iota:R_{X_A}(\Gamma_{A}')\subset R_{X_A}(\Gamma_{A})$ splits.
In fact we have the natural $R_{X_A}(\Gamma_{A}')$ module homomorphism
\begin{equation*}
\varphi:R_{X_A}(\Gamma_{A})\to R_{X_A}(\Gamma_{A}'),
\end{equation*}
which is defined as follows:
for $f\in R_{X_A}(\Gamma_{A})$, write $f=\sum_{D_{A}\in\Gamma_{A}}f_{D_{A}}$. Define
\begin{equation*}
\varphi(f)=\sum_{D_{A}\in\Gamma_{A}'}f_{D_{A}}.
\end{equation*}
It is easy to see that $\varphi$ is a $R_{X_A}(\Gamma_{A}')$-linear and
$\varphi\circ\iota=\mathrm{id}_{R_{X_A}(\Gamma_{A}')}$.
Once we have such a splitting, it is clear that for any closed point $\mu\in\Spec \, A$,
$R_{X_A}(\Gamma_{A}')\otimes_Ak(\mu)$ is a split subring of $R_{X_A}(\Gamma_{A})
\otimes_Ak(\mu)$.
Now the conclusion follows from the fact that the strong $F$-regularity descends to
a direct summand (see \cite[Theorem 3.1]{hh}).
\end{proof}

\begin{defi}[{\cite[(2.1)]{Hashi}}]
Let $\Gamma$ be a finitely generated torsion free abelian group. 
Let $R$ be a (not necessarily Noetherian) $\Gamma$-graded integral domain of characteristic $p>0$. 
For each integer $e \ge 1$, $F^e_*R$ is just $R$ as an abelian group, but its $R$-module structure is determined by $r \cdot x:= r^{p^e}x$ for all $r \in R$ and $x \in F^e_*R$. 
We give $F^e_*R$ a $\frac{1}{p^e}\Gamma$-module structure by putting $[F^e_*R]_{n/p^e}=[R_n]$. 

We say that $R$ is \textit{quasi-$F$-regular} if for any homogeneous nonzero element $c \in R$ of degree $n$, there exists an integer $e \ge 1$ such that 
$$cF^e: R \to F^e_*R(n)$$
splits as a $\frac{1}{p^e}\Gamma$-graded $R$-linear map, where $R(n)$ denotes the degree shifting of $R$ by $n$.
\end{defi}

\begin{rem}
When $R$ is a Noetherian $F$-finite $\Gamma$-graded integral domain, 
$R$ is quasi-$F$-regular if and only if $R$ is strongly $F$-regular. 
\end{rem}

\begin{rem}
The notion of $F$-purity can be defined for non-Noetherian rings. 
Let $R$ be a (not necessarily Noetherian) ring of prime characteristic $p$. 
We say $R$ is $F$-pure if the Frobenius map $R \to F_*R$ is pure, that is, $M \to F_*R \otimes_R M$ is injective for every $R$-module $M$. 
When $R$ is a Noetherian and $F$-finite, this definition coincides with that given in Definition \ref{defi-sFreg}. 
\end{rem}

\begin{lem}[{cf.~\cite[Lemma 2.10]{Hashi}}]\label{sannai lem} 
Let $X$ be a normal projective variety defined over an $F$-finite field of characteristic $p>0$ and $\Gamma$ be a semigroup of Weil divisors on $X$. 
If $X$ is globally $F$-regular $($resp. globally $F$-split$)$, then $R_X(\Gamma)$ is quasi-$F$-regular $($resp. $F$-pure$)$. 
\end{lem}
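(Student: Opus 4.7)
The plan is to construct the required splittings on the sheaf level using the hypothesis of global $F$-regularity (or $F$-splitting), and then assemble a ring-level splitting by twisting and summing the sheaf splittings over the elements of $\Gamma$. I will describe the quasi-$F$-regular case in detail; the $F$-pure case follows by the same argument, with $cF^e$ replaced by the Frobenius $\mathcal{O}_X\to F_*\mathcal{O}_X$ and with the observation that a splitting of Frobenius gives purity even in the non-Noetherian setting.

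Let $c\in R_X(\Gamma)$ be a nonzero homogeneous element of degree $D\in\Gamma$, viewed as a nonzero section of $\mathcal{O}_X(D)$, and set $E:=\mathrm{div}(c)+D$, an effective Weil divisor on $X$. Multiplication by $c^{-1}$ gives an isomorphism $\mathcal{O}_X(D)\xrightarrow{\sim}\mathcal{O}_X(E)$ that identifies the sheaf map $cF^e:\mathcal{O}_X\to F^e_*\mathcal{O}_X(D)$ (locally $x\mapsto cx^{p^e}$) with the composition $\mathcal{O}_X\xrightarrow{F^e} F^e_*\mathcal{O}_X\hookrightarrow F^e_*\mathcal{O}_X(E)$. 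By global $F$-regularity applied to $E$, some $e\ge 1$ makes the latter composition split; transporting such a splitting back along the isomorphism produces a map $\phi':F^e_*\mathcal{O}_X(D)\to\mathcal{O}_X$ with $\phi'\circ cF^e=\mathrm{id}_{\mathcal{O}_X}$.

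For each $G\in\Gamma$, tensor $\phi'$ with $\mathcal{O}_X(G)$ and invoke the projection formula $F^e_*\mathcal{O}_X(D)\otimes\mathcal{O}_X(G)\cong F^e_*\mathcal{O}_X(D+p^eG)$, which holds for the reflexive sheaves attached to Weil divisors after restricting to the smooth locus of $X$ and extending across the codimension $\ge 2$ singular set by reflexivity. This yields a sheaf-level splitting $\phi'_G:F^e_*\mathcal{O}_X(D+p^eG)\to\mathcal{O}_X(G)$ of the twisted map $\mathcal{O}_X(G)\to F^e_*\mathcal{O}_X(D+p^eG)$. Taking global sections and summing over $G\in\Gamma$ produces a $k$-linear map $\psi:F^e_*R_X(\Gamma)(D)\to R_X(\Gamma)$ which preserves the $\tfrac{1}{p^e}\Gamma$-grading (the degree-$G$ piece of the source is $H^0(X,\mathcal{O}_X(D+p^eG))$) and satisfies $\psi\circ cF^e=\mathrm{id}$.

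What is left is to check that $\psi$ is $R_X(\Gamma)$-linear. For $r\in R_X(\Gamma)_H$ and $t\in H^0(X,\mathcal{O}_X(D+p^eG))$, the $R$-module action on $F^e_*R_X(\Gamma)(D)$ gives $r\cdot t=r^{p^e}t$, and we must verify $\psi(r^{p^e}t)=r\,\psi(t)$. Under the projection-formula identification $F^e_*\mathcal{O}_X(D+p^e(G+H))\cong F^e_*\mathcal{O}_X(D+p^eG)\otimes\mathcal{O}_X(H)$ the element $r^{p^e}t$ corresponds to $t\otimes r$, and the identity $\phi'_{G+H}=\phi'_G\otimes\mathrm{id}_{\mathcal{O}_X(H)}$, which follows from the $\mathcal{O}_X$-linearity of $\phi'$, yields the required equality after taking $H^0$. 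The main point requiring care is the projection formula for the reflexive, non-invertible sheaves $\mathcal{O}_X(G)$ associated to Weil divisors, which is a standard manipulation on the smooth locus of $X$ combined with the fact that global sections of reflexive sheaves extend uniquely across the codimension $\ge 2$ singular locus.
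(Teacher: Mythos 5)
Your proof is correct, and it is in substance a self-contained version of the argument that the paper obtains by citing \cite[Lemma 2.10]{Hashi}: reduce the homogeneous element $c$ of degree $D$ to the effective divisor $E=\mathrm{div}(c)+D$, split $\mathcal{O}_X\to F^e_*\mathcal{O}_X(E)$ by global $F$-regularity, transport back, twist by each $\mathcal{O}_X(G)$ via the reflexive projection formula, and sum over $\Gamma$ to get a graded $R_X(\Gamma)$-linear splitting. The one delicate point --- that the projection formula for the non-invertible reflexive sheaves $\mathcal{O}_X(G)$ must be taken on the smooth locus and extended by reflexivity, and that the resulting identifications are compatible so that $\psi$ is $R_X(\Gamma)$-linear --- is exactly the point you address, so there is no gap.
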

\begin{proof}
The globally $F$-regular case follows from \cite[Lemma 2.10]{Hashi} and the globally $F$-split case also follows from essentially the same argument. 
\end{proof}

\begin{prop}\label{sannai thm}
Let $X$ be a normal projective variety over an $F$-finite field of characteristic $p>0$. 
Then $X$ is globally $F$-regular if and only if its Cox rings are quasi-$F$-regular.
\end{prop}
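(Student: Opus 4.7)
The only-if direction will follow immediately from Lemma \ref{sannai lem} applied to the semigroup $\Gamma$ defining the Cox ring, so the substance is in the converse. Assuming $R_X(\Gamma)$ is quasi-$F$-regular, my plan is to extract strong $F$-regularity of a section ring and invoke Proposition \ref{glsec}.

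First I would pick an ample Cartier divisor $H$ whose class is represented by an element of $\Gamma$; such an $H$ exists because $\Gamma_{\mathbb{Q}}\to\Cl(X)_{\mathbb{Q}}$ is surjective, so one can start from any ample Cartier divisor and pass to a suitable positive multiple. Then $\mathbb{N}H\subset\Gamma$ is a sub-semigroup, and the splitting argument from the proof of Lemma \ref{lem-ring-sum} produces a $\Gamma$-graded, $R_X(\mathbb{N}H)$-linear projection $\varphi\colon R_X(\Gamma)\to R_X(\mathbb{N}H)$ onto the $\mathbb{N}H$-graded components, splitting the inclusion $R(X,H)=R_X(\mathbb{N}H)\hookrightarrow R_X(\Gamma)$.

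The key step is to check that quasi-$F$-regularity descends along this graded split inclusion. Given a nonzero homogeneous $c\in R(X,H)$ of degree $n$, quasi-$F$-regularity of $R_X(\Gamma)$ supplies an integer $e\geq 1$ and a $\tfrac{1}{p^e}\Gamma$-graded splitting $\sigma\colon F^e_*R_X(\Gamma)(n)\to R_X(\Gamma)$ of $cF^e$. Composing with the graded inclusion $F^e_*R(X,H)\hookrightarrow F^e_*R_X(\Gamma)$ on the source and with $\varphi$ on the target will produce a $\tfrac{1}{p^e}\mathbb{N}$-graded splitting of $cF^e\colon R(X,H)\to F^e_*R(X,H)(n)$. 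Hence $R(X,H)$ is quasi-$F$-regular; since it is a Noetherian $F$-finite $\mathbb{N}$-graded integral domain, this coincides with strong $F$-regularity, and Proposition \ref{glsec} then yields the global $F$-regularity of $X$.

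The main point requiring care, rather than a genuine obstacle, is to verify that the composite splitting respects the gradings (the $\Gamma$-grading on $R_X(\Gamma)$ and its refinement to the $\tfrac{1}{p^e}\Gamma$-grading on $F^e_*R_X(\Gamma)$); this is essentially built into the construction of $\varphi$ as a projection onto graded components and into the graded definition of quasi-$F$-regularity itself. If anything is delicate, it is the initial selection of $\Gamma$ so that one can guarantee an ample Cartier divisor inside it, but this is easily handled using the ambiguity of Cox rings discussed in Remark \ref{ambiguity of Cox ring}.
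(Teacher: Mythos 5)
Your proposal is correct and follows essentially the same route as the paper: the forward direction is Lemma \ref{sannai lem}, and the converse realizes $R(X,H)=R_X(\mathbb{N}H)$ as a graded direct summand of $R_X(\Gamma)$ for an ample $H\in\Gamma$, descends (quasi-)$F$-regularity to this Noetherian $F$-finite summand, and concludes via Proposition \ref{glsec}. The paper's proof simply asserts the descent to the direct summand; your explicit verification of the graded splitting compatibility is a harmless elaboration of the same argument.
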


\begin{proof}
If $X$ is globally $F$-regular, then by Lemma \ref{sannai lem}, any multisection ring of $X$ is quasi-$F$-regular, and so are the Cox rings of $X$.\\
Conversely, suppose that a cox ring $R_X(\Gamma)$ is quasi-$F$-regular. 
Since $\Gamma$ contains an ample divisor $H$ on $X$, its section ring $R(X, H)$ is a graded direct summand of $R_X(\Gamma)$. 
Since $R(X, H)$ is Noetherian and $F$-finite, 
this implies that $R(X, H)$ is strongly $F$-regular. 
It then follows from Proposition \ref{glsec} that $X$ is globally $F$-regular. 
\end{proof}

\begin{thm}\label{sannai conj}
Let $X$ be a $\mathbb{Q}$-factorial
projective variety over an algebraically closed field $k$ of characteristic zero. Then $X$ is of Fano type if and only if it is a Mori dream space and its Cox rings have
only log terminal singularities.
\end{thm}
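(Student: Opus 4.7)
The plan is to combine Theorem \ref{logfano} with the characteristic-$p$ correspondence of Proposition \ref{sannai thm} between global $F$-regularity of $X$ and quasi-$F$-regularity of its Cox ring, together with Hara--Watanabe's theorem identifying strongly $F$-regular type with log terminal singularities \cite[Theorem 3.9]{hw}.

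For the forward direction, I would argue as follows. If $X$ is of Fano type then $X$ is a Mori dream space by \cite{bchm} and is of globally $F$-regular type by Proposition \ref{just singularities}(2). After replacing $\Gamma$ by a sufficiently divisible multiple so that it consists of Cartier divisors (which by Remark \ref{ambiguity of Cox ring} does not affect log-terminality of the Cox ring), Lemma \ref{good model} produces a model $(X_A, \Gamma_A)$ and a dense open $S \subseteq \Spec \, A$ such that, for each $\mu \in S$, the fiber $X_\mu$ is globally $F$-regular and $R_X(\Gamma)_\mu \cong R_{X_\mu}(\Gamma_\mu)$. By Proposition \ref{sannai thm} the latter ring is quasi-$F$-regular, and being Noetherian and $F$-finite it is strongly $F$-regular. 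Hence $R_X(\Gamma)$ is of strongly $F$-regular type, and by \cite[Theorem 3.9]{hw} has log terminal singularities.

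For the converse, the aim is to show $X$ is of globally $F$-regular type and then invoke Theorem \ref{logfano}. Suppose $X$ is a Mori dream space and $R := R_X(\Gamma)$ has log terminal singularities. By \cite[Theorem 3.9]{hw} the ring $R$ is of strongly $F$-regular type. Choose an ample Cartier divisor $H \in \Gamma$; the argument of Lemma \ref{lem-ring-sum} exhibits the section ring $R(X, H) = R_X(\mathbb{N}H)$ as a graded direct summand of $R$, so it is likewise of strongly $F$-regular type (since strong $F$-regularity descends to direct summands by \cite[Theorem 3.1]{hh}). Taking $N \gg 0$ and shrinking the base $A$ so that $H^i(X_A, \mathcal{O}_{X_A}(mNH_A)) = 0$ for all $i > 0$ and $m \geq 1$ via Serre vanishing, cohomology and base change \cite[III, Theorem 12.11]{har} yields $R(X, NH)_\mu \cong R(X_\mu, NH_\mu)$ on a dense open subset of $\Spec \, A$. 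The $N$-th Veronese $R(X, NH)$ is again a graded direct summand of $R(X, H)$, hence of strongly $F$-regular type, so $R(X_\mu, NH_\mu)$ is strongly $F$-regular for almost all $\mu$, and Proposition \ref{glsec} then forces $X_\mu$ to be globally $F$-regular. Thus $X$ is of globally $F$-regular type, and Theorem \ref{logfano} concludes.

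The main obstacle I anticipate is the base-change bookkeeping in the converse direction: strong $F$-regularity is detected on the characteristic-zero Cox ring, whereas global $F$-regularity must be verified on the characteristic-$p$ fibers $X_\mu$. Lemma \ref{good model} cannot be invoked directly, since it already assumes the globally $F$-regular type hypothesis we are trying to establish. Passing first to a section ring as a graded direct summand and then to a sufficiently deep Veronese should bypass this circularity, since Serre vanishing then handles the base change without any a priori $F$-regularity input on $X$.
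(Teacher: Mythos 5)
Your proposal is correct and follows essentially the same route as the paper: Lemma \ref{good model} plus Lemma \ref{sannai lem}/Proposition \ref{sannai thm} and Hara--Watanabe for the forward direction, and descent of strong $F$-regularity type to the section ring $R(X,H)$ followed by Serre vanishing, cohomology and base change, Proposition \ref{glsec}, and Theorem \ref{logfano} for the converse. Your observation that passing to a section ring and a deep Veronese avoids the circularity in invoking Lemma \ref{good model} is exactly the point of the paper's argument (the paper cites \cite[Theorem 5.2]{hara} rather than \cite[Theorem 3.9]{hw} for the implication from log terminal to strongly $F$-regular type, but this is only a difference in attribution).
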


\begin{proof}[Proof of Theorem \ref{sannai conj}]
Let $\Gamma$ be a group of Cartier divisors on $X$ which defines a Cox ring of $X$.

First assume that $X$ is of Fano type.
Then by \cite[Corollary 1.3.2]{bchm}, $R_X(\Gamma)$ is a finitely generated algebra over $k$. 
Also by Proposition \ref{just singularities}, $X$ is of globally $F$-regular type. 
Replacing $\Gamma$ with a suitable positive multiple if necessary, by Lemma \ref{good model}, we can take a model $(X_A, \Gamma_A)$ of $(X, \Gamma)$ over a finitely generated $\mathbb{Z}$-subalgebra $A$ of $k$ such that 
\begin{enumerate}
\item $X_{\mu}=X_A\times_{\Spec \, A }{\Spec \, k(\mu)}$ is globally $F$-regular,
\item $R_X(\Gamma)_{\mu}=R_{X_A}(\Gamma_A) \otimes_A k(\mu) \cong R_{X_{\mu}}(\Gamma_{\mu})$
\end{enumerate}
for all closed points $\mu \in \Spec \, A$. 

It follows from Lemma \ref{sannai lem} and (1) that $R_{X_\mu}(\Gamma_{\mu})$ is strongly $F$-regular for all closed points $\mu \in \Spec \, A$, which means by (2) that $R_X(\Gamma)$ is of strongly $F$-regular type.  
Since $\Spec {R_X(\Gamma)}$ is $\mathbb{Q}$-Gorenstein, we can conclude from \cite[Theorem 3.9]{hw} that $\Spec \, R_X(\Gamma)$ has only  log terminal singularities. 

 Conversely, suppose that the Cox ring $R_X{(\Gamma)}$ of $X$ is finitely generated over $k$ and has only log terminal singularities. 
Then we see that $R_X{(\Gamma)}$ is of strongly $F$-regular type by \cite[Theorem 5.2]{hara}.
Take an ample divisor $H\in\Gamma$ on $X$. 
Since $R(X, H)=R_X(\mathbb{Z}H)$ and $\mathbb{Z}H$ is a sub-semigroup of $\Gamma$, 
by Lemma \ref{lem-ring-sum}, $R(X, H)$ is also of strongly $F$-regular type.
By replacing $H$ with its positive multiple and enlarging $A$ if necessary,
we may assume that
$$ R(X_A,H_A)\otimes_A k(\mu)\cong R(X_{\mu},H_{\mu})$$
holds for any closed point $\mu\in\Spec{A}$ (use the Serre vanishing theorem
and the Grauert theorem \cite[Corollary 12.9]{har}).
It then follows from Proposition \ref{glsec} that $X$ is of globally $F$-regular type, which implies  
by Theorem \ref{logfano} that $X$ is of Fano type. 
Thus, we finish the proof of Theorem \ref{sannai conj}.
\end{proof}

\begin{rem}
Brown \cite{brown-Fano} proved without using characteristic $p$ methods a special case of Theorem \ref{sannai conj} that if $X$ is a $\mathbb{Q}$-factorial Fano variety with only log terminal singularities, then its Cox rings have only log terminal singularities. 
We, however, don't know how to prove Theorem \ref{sannai conj} in its full generality without using characteristic $p$ methods. 
\end{rem}

\begin{rem}
Suppose that $X$ is a variety of Fano type defined over an algebraically closed field of characteristic zero. 
If $X$ is in addition locally factorial and $\Cl{(X)}$ is free, then its cox rings have only Gorenstein canonical singularities. 
\end{rem}

By an argument similar to the proof of Theorem \ref{sannai conj}, we can show that if a Calabi--Yau surface $X$ is a Mori dream space, then the Cox rings of $X$ have only log canonical singularities. 
\begin{thm}\label{CY surface}
Let $X$ be a klt projective surface over an algebraically closed field $k$ of characteristic zero such that $K_X \sim_{\mathbb{Q}} 0$. 
If $X$ is a Mori dream space, then its Cox rings have only log canonical singularities. 
\end{thm}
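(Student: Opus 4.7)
The plan is to mirror the proof of Theorem~\ref{sannai conj}, replacing globally $F$-regular type with dense globally $F$-split type, strong $F$-regularity with $F$-purity, and log terminality with log canonicity throughout. Let $\Gamma$ be a group of Cartier divisors on $X$ defining a Cox ring $R_X(\Gamma)$; since $X$ is a Mori dream space, this ring is finitely generated over $k$, and because $X$ is klt with $K_X \sim_{\mathbb{Q}} 0$, the affine cone $\Spec R_X(\Gamma)$ is $\mathbb{Q}$-Gorenstein by the same reasoning used in the Fano case.

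Granting that $X$ is of dense globally $F$-split type, the remaining steps would proceed in direct parallel with the proof of Theorem~\ref{sannai conj}. By Lemma~\ref{good model}, after replacing $\Gamma$ by a suitable positive multiple, I would obtain a model $(X_A, \Gamma_A)$ over a finitely generated $\mathbb{Z}$-subalgebra $A \subseteq k$ and a dense subset $S \subseteq \Spec A$ of closed points such that $X_\mu$ is globally $F$-split and
\begin{equation*}
R_X(\Gamma)_\mu \;\cong\; R_{X_\mu}(\Gamma_\mu)
\end{equation*}
for every $\mu \in S$. Lemma~\ref{sannai lem} then yields that each $R_{X_\mu}(\Gamma_\mu)$ is $F$-pure, so $R_X(\Gamma)$ is of dense $F$-pure type, and the $\mathbb{Q}$-Gorenstein case of \cite[Theorem 3.9]{hw} delivers log canonicity of $\Spec R_X(\Gamma)$.

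The hard part is therefore the surface-specific input: that a klt projective surface $X$ with $K_X \sim_{\mathbb{Q}} 0$ is of dense globally $F$-split type. My approach would exploit the classification of such surfaces. After passing to a global index-one cover $\pi \colon X' \to X$, which is finite and quasi-\'etale, one reduces to $K_{X'} \sim 0$; then $X'$ is Gorenstein klt with only Du Val singularities, and its minimal crepant resolution $\widetilde{X}' \to X'$ is a smooth projective surface with $K_{\widetilde{X}'} \sim 0$, hence a K3 or abelian surface. Each admits ordinary reduction at a dense set of primes --- by Serre--Tate theory for abelian surfaces and Nygaard--Ogus-type results for K3 surfaces --- and ordinary reduction implies global $F$-splitness. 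This then descends first along the minimal resolution (since a Frobenius splitting pushes forward through any proper birational morphism with $\pi_* \mathcal{O} = \mathcal{O}$) and then along the quasi-\'etale cover $X' \to X$ via the normalized trace, the latter requiring only that $p$ avoid the finitely many primes dividing the covering degree, yielding dense global $F$-splitness of $X$ itself.
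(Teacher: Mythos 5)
Your proposal is correct in outline, and its second half (Lemma~\ref{good model} with the dense globally $F$-split/$F$-pure variants, Lemma~\ref{sannai lem}, $\mathbb{Q}$-Gorensteinness of the Cox ring, and \cite[Theorem 3.9]{hw}) coincides with the paper's argument verbatim. Where you genuinely diverge is in the surface-specific input, namely that $X$ is of dense globally $F$-split type. The paper gets this by passing to the affine cone: by \cite[Proposition 5.4]{schsmith-logfano} the cone $\Spec R(X,H)$ over an ample $H$ is log canonical with an isolated non-klt point at the vertex, so \cite[Corollary 3.6]{ft} (on the dense $F$-purity of isolated log canonical singularities) gives that $R(X,H)$ is of dense $F$-pure type, and Proposition~\ref{glsec} converts this into dense global $F$-splitting of $X$. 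You instead argue directly on $X$: index-one cover, crepant minimal resolution to a K3 or abelian surface, density of ordinary primes, ordinarity implies Frobenius splitting, and descent along the resolution and the prime-to-$p$ cover via the normalized trace. This is essentially an unpacking of the proof of \cite[Corollary 3.6]{ft} in the case at hand, so the two routes rest on the same deep arithmetic facts (ordinary reduction of abelian and K3 surfaces); your version is more self-contained and geometrically transparent, while the paper's citation-based route is shorter and sidesteps the one real technical point you should be careful about: the density-of-ordinary-primes theorems (Serre--Tate/Ogus for abelian surfaces, Bogomolov--Zarhin for K3s) are stated over number fields, whereas $k$ here is an arbitrary algebraically closed field of characteristic zero, so you need a specialization/spreading-out argument to get a dense set of ordinary closed points of $\Spec A$; this bookkeeping, together with the compatibility of the index-one cover and the minimal resolution with reduction modulo $p$, is exactly what \cite{ft} packages for the paper.
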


\begin{proof}
The proof is similar to that of Theorem \ref{sannai conj}. 
Let $H$ be any ample Cartier divisor on $X$. 
By \cite[Proposition 5.4]{schsmith-logfano}, the affine cone $\Spec \, R(X, H)$ of $X$ has only log canonical singularities and its vertex is an isolated non-klt point of $\Spec \, R(X, H)$. 
It then follows from \cite[Corollary 3.6]{ft} that $R(X, H)$ is of dense $F$-pure type, which implies by Proposition \ref{glsec} that $X$ is of dense globally $F$-split type. 

Let $\Gamma$ be a group of Cartier divisors on $X$ which defines a Cox ring of $X$.
Replacing $\Gamma$ with its positive multiple if necessary, by Lemma \ref{good model}, we can take a model $X_A$ of $X$ and $\Gamma_A$ of $\Gamma$ over a finitely generated $\mathbb{Z}$-subalgebra $A$ of $k$ and a dense subset $S \subseteq \Spec A$ of closed points such that 
\begin{enumerate}
\item $X_{\mu}$ \textup{ is globally $F$-split,}
\item $R_X(\Gamma)_{\mu}=R_{X_A}(\Gamma_A) \otimes_A k(\mu) \cong R_{X_{\mu}}(\Gamma_{\mu})$
\end{enumerate}
for all $\mu \in S$. 
It then follows from Lemma \ref{sannai lem}  and (1) that  $R_{X_{\mu}}(\Gamma_{\mu})$ is $F$-pure for all closed points $\mu \in S$, which means by (2) that $R_X(\Gamma)$ is of dense $F$-pure type. 
Since $R_X(\Gamma)$ is $\mathbb{Q}$-Gorenstein, we can conclude from \cite[Theorem 3.9]{hw} that $\Spec \, R_X(\Gamma)$ has only log canonical singularities. 
\end{proof}

\begin{rem}
The notion of $F$-purity is defined also for a pair of a normal variety $X$ and an effective $\mathbb{Q}$-divisor $\Delta$ on $X$ (the reader is referred to \cite[Definition 2.1]{hw} for the definition of $F$-pure pairs). 
It is conjectured that modulo $p$ reduction of a log canonical pair is $F$-pure for infinitely many $p$ :

\begin{conj}[{cf.~\cite[Problem 5.1.2]{hw}}]\label{Fpure vs lc}
Let $X$ be a normal variety over an algebraically closed field of characteristic zero and $\Delta$ be an effective $\mathbb{Q}$-divisor on $X$ such that $K_X+\Delta$ is $\mathbb{Q}$-Cartier. 
Then the pair $(X, \Delta)$ is log canonical if and only if it is of dense $F$-pure type.
\end{conj}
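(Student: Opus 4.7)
The only-if direction---that dense $F$-pure type implies log canonicity---is the pair-version of Proposition~\ref{just singularities}(1), which I would extract from \cite[Theorem 3.9]{hw}. Concretely, given a model $(X_A,\Delta_A)$ and a dense set $S\subseteq\Spec A$ of closed points at which $(X_\mu,\Delta_\mu)$ is $F$-pure, a splitting of the twisted Frobenius $\mathcal{O}_{X_\mu}\to F^e_*\mathcal{O}_{X_\mu}(\lceil(p^e-1)\Delta_\mu\rceil)$ pulls back along any log resolution to give $a(E,X_\mu,\Delta_\mu)\ge -1$ for every prime divisor $E$ over $X_\mu$; since discrepancies are preserved under reduction modulo $p$, this transfers to log canonicity of $(X,\Delta)$.

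My plan for the converse is conditional and would proceed by induction on $n=\dim X$, with $n=1$ immediate. For the inductive step, I work locally on $X$ and split into two cases according to whether $(X,\Delta)$ is klt or strictly log canonical. When $(X,\Delta)$ is klt, I would prove the stronger statement that it is of dense strongly $F$-regular type by invoking the Hara correspondence \cite[Theorem 5.2]{hara} between test ideals and multiplier ideals: klt forces the multiplier ideal to be the unit ideal, which translates into strong $F$-regularity for almost every $p$ after reduction. When $(X,\Delta)$ is strictly log canonical, choose a minimal non-klt center $Z\subseteq X$. Kawamata's subadjunction yields an effective $\mathbb{Q}$-divisor $\Delta_Z$ on the normalization of $Z$ with $(Z,\Delta_Z)$ log canonical in dimension $<n$, which is of dense $F$-pure type by the inductive hypothesis. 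The step I would then attempt is to lift an $F$-pure splitting of $(Z,\Delta_Z)_\mu$ back to $(X,\Delta)_\mu$ via an inversion-of-adjunction argument for Frobenius splittings in the spirit of Schwede's centers of $F$-purity.

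The main obstacle is precisely this lifting step. Subadjunction introduces, besides $\Delta_Z$, a moduli contribution that is only $\mathbb{Q}$-linearly trivial, and constructing a splitting compatible with it forces the geometric Frobenius to act invertibly on a certain cohomology group---this is a form of the weak ordinarity conjecture of Mustata--Srinivas. In the very special case $\dim X=2$, $K_X\sim_\mathbb{Q} 0$, $\Delta=0$ (the Calabi--Yau surface setting), this ordinarity input is available through the log canonical cone construction used via \cite[Corollary 3.6]{ft}, which is exactly the argument of Theorem~\ref{CY surface}. In full generality no such input is known, and so the conjecture is at present out of reach; the approach above establishes it unconditionally only in low dimensions and in those classes of varieties (curves, toric varieties, abelian varieties, and certain Fano-type varieties) where the required $F$-ordinarity has been proved independently, and would become a full proof the moment the weak ordinarity conjecture is settled in the relevant geometric generality.
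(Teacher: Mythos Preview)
The statement you are addressing is Conjecture~\ref{Fpure vs lc}: the paper records it as an open problem and gives no proof. So there is no ``paper's own proof'' to compare against; the appropriate response to this item is simply to note that the statement is conjectural.

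That said, your write-up is largely accurate as a status report on the conjecture. One small slip: you have the labels reversed. In ``$(X,\Delta)$ is log canonical if and only if it is of dense $F$-pure type'', the implication ``dense $F$-pure type $\Rightarrow$ log canonical'' is the \emph{if} direction (and is indeed the one established by \cite[Theorem~3.9]{hw}, as you say), while ``log canonical $\Rightarrow$ dense $F$-pure type'' is the \emph{only if} direction and is the open one. Your identification of the obstruction with the weak ordinarity conjecture of Musta\c{t}\u{a}--Srinivas, and your remark that the surface Calabi--Yau case handled in Theorem~\ref{CY surface} goes through precisely because the needed ordinarity input is supplied by \cite[Corollary~3.6]{ft}, are both on target. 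But none of this constitutes a proof, and the paper does not claim one either; you should present this as commentary on an open conjecture rather than as a proof proposal.
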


If Conjecture \ref{Fpure vs lc} is true, then we can give a characterization of Mori dream spaces of Calabi--Yau type in terms of the singularities of their Cox rings, using an argument similar to the proof of Theorem \ref{sannai conj}.

\begin{thm}
Let $X$ be a $\mathbb{Q}$-factorial Mori dream space over an algebraically closed field of characteristic zero. 
Suppose that Conjecture \ref{Fpure vs lc} is true. 
Then $X$ is of Calabi--Yau type if and only if its Cox rings have only log canonical singularities. 
\end{thm}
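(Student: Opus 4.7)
The plan is to mirror the proof of Theorem \ref{sannai conj}, systematically replacing ``log terminal / strongly $F$-regular / globally $F$-regular'' by ``log canonical / $F$-pure / globally $F$-split'' throughout. The role that \cite[Theorem 5.2]{hara} plays in the Fano case will be played by Conjecture \ref{Fpure vs lc}, which is needed in \emph{both} directions since Hara's result (log terminality in characteristic zero equals strongly $F$-regular type) has no unconditional analogue on the log canonical side.

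For the ``only if'' direction, assume $X$ is of Calabi--Yau type with witness $(X,\Delta)$, so $(X,\Delta)$ is log canonical and $K_X+\Delta\sim_{\mathbb{Q}}0$. Conjecture \ref{Fpure vs lc} makes $(X,\Delta)$ of dense $F$-pure type; since $K_X+\Delta\sim_{\mathbb{Q}}0$, each splitting of $\mathcal{O}_{X_\mu}\to F^e_*\mathcal{O}_{X_\mu}(\lceil (p^e-1)\Delta_\mu\rceil)$ identifies, via $(1-p^e)(K_{X_\mu}+\Delta_\mu)\sim 0$, with a splitting of the Frobenius of $X_\mu$, so $X$ is of dense globally $F$-split type. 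The Cox ring $R_X(\Gamma)$ is finitely generated by the Mori dream space hypothesis. Taking $\Gamma$ to consist of Cartier divisors and applying Lemma \ref{good model} together with Lemma \ref{sannai lem} to a dense subset of closed points yields that $R_X(\Gamma)$ is of dense $F$-pure type. As in the proof of Theorem \ref{sannai conj}, $\Spec R_X(\Gamma)$ is $\mathbb{Q}$-Gorenstein, so \cite[Theorem 3.9]{hw} forces it to have only log canonical singularities.

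For the converse, assume $R_X(\Gamma)$ has only log canonical singularities. Conjecture \ref{Fpure vs lc} applied to $\Spec R_X(\Gamma)$ then provides dense $F$-pure type. Take an ample $H\in\Gamma$; the section ring $R(X,H)=R_X(\mathbb{Z}H)$ embeds as a graded direct summand of $R_X(\Gamma)$ via the retraction constructed in the proof of Lemma \ref{lem-ring-sum}, and $F$-purity descends to direct summands by an identical graded argument, so $R(X,H)$ is also of dense $F$-pure type. Invoking the Grauert--Serre-vanishing base-change argument used at the end of the proof of Theorem \ref{sannai conj}, after replacing $H$ by a sufficiently positive multiple and enlarging $A$, we obtain $R(X_A,H_A)\otimes_A k(\mu)\cong R(X_\mu,H_\mu)$. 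Hence $X_\mu$ is globally $F$-split for $\mu$ in a dense subset by Proposition \ref{glsec}, i.e.\ $X$ is of dense globally $F$-split type, and Theorem \ref{logcy} concludes that $X$ is of Calabi--Yau type.

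The main obstacle I anticipate is the passage ``$(X,\Delta)$ of dense $F$-pure type with $K_X+\Delta\sim_{\mathbb{Q}}0$ implies $X$ of dense globally $F$-split type'' used in the forward direction: one has to check carefully that the pair-theoretic splitting of $\mathcal{O}_{X_\mu}\to F^e_*\mathcal{O}_{X_\mu}(\lceil(p^e-1)\Delta_\mu\rceil)$, combined with $(p^e-1)(K_{X_\mu}+\Delta_\mu)\sim 0$ (after clearing denominators by restricting to appropriate $e$), indeed induces an honest splitting of the Frobenius of $X_\mu$. A secondary point worth verifying is the $\mathbb{Q}$-Gorensteinness of $\Spec R_X(\Gamma)$ needed to invoke \cite[Theorem 3.9]{hw}; this follows from the same considerations tacitly used in the proof of Theorem \ref{sannai conj}.
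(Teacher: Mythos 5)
Your converse direction is correct and is exactly the intended transposition of the proof of Theorem \ref{sannai conj}: Conjecture \ref{Fpure vs lc} applied to the ($\mathbb{Q}$-Gorenstein) affine variety $\Spec \, R_X(\Gamma)$ gives dense $F$-pure type, the graded retraction from the proof of Lemma \ref{lem-ring-sum} passes this down to $R(X,H)$ (and $F$-purity does descend to split subrings by the same argument as for strong $F$-regularity), Proposition \ref{glsec} then gives dense globally $F$-split type of $X$, and Theorem \ref{logcy} finishes.

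The forward direction, however, contains a genuine gap, and it is precisely the step you flagged but did not resolve. Conjecture \ref{Fpure vs lc} (via \cite[Definition 2.1]{hw}) is a statement about \emph{local} $F$-purity of the reductions $(X_\mu,\Delta_\mu)$: it produces splittings of $\mathcal{O}_{X_\mu}\to F^e_*\mathcal{O}_{X_\mu}(\lceil(p^e-1)\Delta_\mu\rceil)$ stalk by stalk, and such local splittings do not glue to a global one. Global $F$-splitting is strictly stronger than everywhere-local $F$-purity: a non-ordinary reduction of an elliptic curve or a K3 surface is smooth, hence locally $F$-pure (even regular) at every point, and satisfies $K\sim 0$, yet is not globally $F$-split. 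So no amount of care with $(1-p^e)(K_{X_\mu}+\Delta_\mu)\sim 0$ will upgrade the local splittings to a splitting of the Frobenius of $X_\mu$; conversely, if the splitting \emph{were} global it would restrict to a Frobenius splitting for free, without any use of $K_X+\Delta\sim_{\mathbb{Q}}0$, which signals that the triviality of $K_X+\Delta$ must enter elsewhere. The correct route --- the one used in the proof of Theorem \ref{CY surface}, where \cite[Corollary 3.6]{ft} plays the role of the conjecture --- is to apply Conjecture \ref{Fpure vs lc} to an \emph{affine cone}: because $K_X+\Delta\sim_{\mathbb{Q}}0$, the cone $\Spec \, R(X,H)$ over an ample $H\in\Gamma$, equipped with the cone divisor over $\Delta$, is log canonical (cf.\ \cite[Proposition 5.4]{schsmith-logfano}); the conjecture then makes $R(X,H)$ of dense $F$-pure type, and Proposition \ref{glsec} converts this ring-level $F$-purity into dense global $F$-splitting of $X$. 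With that substitution the remainder of your forward direction (Lemma \ref{good model}, Lemma \ref{sannai lem}, and the unconditional direction of \cite[Theorem 3.9]{hw}) goes through as written.
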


\end{rem}

\section{Case of Non-$\mathbb{Q}$-factorial Mori dream space}\label{non-Q-factorial}
In this section we generalize our results to not-necessarily $\mathbb{Q}$-factorial
Mori dream spaces (see \cite[Section 10]{o}).
These varieties admit a small $\mathbb{Q}$-factorial modification
by a Mori dream space, so that we can apply our results obtained
so far.

\begin{defi}
Let $X$ be normal projective variety whose divisor class group $\Cl{(X)}$ is
finitely generated.
Choose a finitely generated group of Weil divisors $\Gamma$ on $X$ such that
the natural map
$$ \Gamma_{\mathbb{Q}}\to\Cl{(X)}_{\mathbb{Q}}$$
is an isomorphism.
$X$ is said to be a \textit{not-necessarily $\mathbb{Q}$-factorial Mori dream space}
if the multi-section ring $R_X(\Gamma)$ is of finite type over the base field.
\end{defi}

When $X$ is $\mathbb{Q}$-factorial, this coincides with an ordinary Mori dream space.
The following is quite useful:
\begin{prop}\label{Q-factorization}
For a not necessarily $\mathbb{Q}$-factorial Mori dream space $X$
we can find a small birational morphism $X'\to X$ from a $\mathbb{Q}$-factorial
Mori dream space $X'$.
\end{prop}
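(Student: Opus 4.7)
The plan is to realize $X$ as a GIT quotient of the affine spectrum of its Cox ring and then extract $X'$ by perturbing the GIT linearization into the interior of an adjacent top-dimensional Mori chamber; this is essentially the Hu--Keel picture \cite[Section 2]{hk} underlying \cite[Section 10]{o}.

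First I would set $R := R_X(\Gamma)$ with $\Gamma$ finitely generated so that $\Gamma_{\mathbb{Q}} \simeq \Cl(X)_{\mathbb{Q}}$, and form the affine variety $V := \Spec R$ equipped with the action of the diagonalizable group $T := \mathrm{Hom}(\Gamma, \mathbb{G}_m)$ induced by the $\Gamma$-grading on $R$. After replacing an ample class $H$ on $X$ by a suitable multiple in $\Gamma$, the GIT quotient $V /\!\!/_H T$ recovers $X$ itself, exhibiting $X$ as a GIT quotient of $V$.

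Next I would invoke variation of GIT to obtain a wall-and-chamber decomposition of the effective cone inside $\Gamma_{\mathbb{R}}$, in which every top-dimensional chamber $C$ gives a $\mathbb{Q}$-factorial projective GIT quotient $X_C := V /\!\!/_C T$ whose Cox ring is canonically $R$ and whose class group is canonically identified with $\Cl(X)$ via proper transform. I would then choose a top-dimensional chamber $C$ whose closure contains $H$; this is possible because $H$ is ample and therefore sits in the interior of the effective cone, hence in the relative interior of some face of some top-dimensional chamber. Setting $X' := X_C$, the inclusion of semi-stable loci $V^{ss}(C) \subseteq V^{ss}(H)$ induces a projective birational morphism $f : X' \to X$, and $X'$ is a $\mathbb{Q}$-factorial Mori dream space since its Cox ring $R_{X'}(\Gamma) \simeq R$ is finitely generated.

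Finally I would argue smallness: because $X'$ and $X$ share the same Cox ring and the same divisor class group, no prime divisor is contracted by $f$, so $f$ is an isomorphism outside a locus of codimension at least two. The main obstacle I anticipate is verifying that moving off the wall where $H$ sits into the interior of $C$ yields a small, rather than divisorial, modification; this requires checking that the effective divisorial strata attached to the linearizations $H$ and to interior points of $C$ coincide, i.e.\ no effective class on $X$ becomes non-effective on $X'$ or vice versa. Both this point and the existence of a suitable top-dimensional chamber $C$ adjacent to $H$ are standard consequences of the GIT formalism for Mori dream spaces, and the whole argument can be organized within the framework of \cite[Section 10]{o} cited by the paper.
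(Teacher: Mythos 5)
Your GIT argument is essentially the proof the paper has in mind: the paper's own ``proof'' is just a citation to \cite[Proof of Theorem 2.3]{ahl} and \cite[Remark 10.3]{o}, where exactly this variation-of-GIT construction on $\Spec$ of the Cox ring (passing from the face of the chamber structure containing the ample class $H$ to an adjacent top-dimensional chamber) is carried out. Your argument is correct as it stands --- in particular the rank comparison of class groups already forces smallness, so the ``main obstacle'' you flag at the end is not an additional issue.
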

\begin{proof}
This is essentially proven in \cite[Proof of Theorem 2.3]{ahl}. See also
\cite[Remark 10.3]{o}.
\end{proof}

\begin{cor}[not necessarily $\mathbb{Q}$-factorial version of Theorem \ref{sannai conj}]\label{non-Q-factorial version}
Let $X$ be a normal projective variety over a field $k$ of characteristic zero.
Then $X$ is of Fano type if and only if it is (not necessarily $\mathbb{Q}$-factorial)
Mori dream space and its Cox rings have only log terminal singularities.
\end{cor}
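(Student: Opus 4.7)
The plan is to reduce the statement to the $\mathbb{Q}$-factorial case, which is already settled by Theorem \ref{sannai conj}, by passing to a small $\mathbb{Q}$-factorialization. The key technical observation is that Cox rings are invariant under small birational morphisms: if $\pi:X'\to X$ is small (i.e., an isomorphism in codimension one), then pushforward induces an isomorphism $\mathrm{Cl}(X')\simeq\mathrm{Cl}(X)$, and for any Weil divisor $D\in\Gamma$ on $X$ and its strict transform $D'\in\Gamma'$ on $X'$, one has $H^0(X,\mathcal{O}_X(D))\simeq H^0(X',\mathcal{O}_{X'}(D'))$ (both sides see only the behavior in codimension one). Summing over $\Gamma$ gives a graded isomorphism $R_X(\Gamma)\simeq R_{X'}(\Gamma')$, so in particular $R_X(\Gamma)$ is of finite type if and only if $R_{X'}(\Gamma')$ is, and they share the same singularities.

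For the ``if'' direction, I would assume $X$ is a not-necessarily $\mathbb{Q}$-factorial Mori dream space with $R_X(\Gamma)$ having only log terminal singularities. By Proposition \ref{Q-factorization}, take a small birational morphism $\pi:X'\to X$ from a $\mathbb{Q}$-factorial Mori dream space $X'$. The identification $R_{X'}(\Gamma')\simeq R_X(\Gamma)$ above shows that $X'$ is a Mori dream space whose Cox ring has only log terminal singularities. Theorem \ref{sannai conj} then gives that $X'$ is of Fano type, and Lemma \ref{inv-flip} transports this along the small map $\pi$ to conclude that $X$ is of Fano type.

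For the ``only if'' direction, assume $X$ is of Fano type. Since $X$ is then klt, BCHM \cite{bchm} supplies a small $\mathbb{Q}$-factorialization $\pi:X'\to X$ with $X'$ $\mathbb{Q}$-factorial. By Lemma \ref{inv-flip}, $X'$ is of Fano type, hence by Theorem \ref{sannai conj} it is a $\mathbb{Q}$-factorial Mori dream space with $R_{X'}(\Gamma')$ of finite type and log terminal. The Cox ring identification $R_X(\Gamma)\simeq R_{X'}(\Gamma')$ then shows that $X$ is a not-necessarily $\mathbb{Q}$-factorial Mori dream space and its Cox ring has only log terminal singularities.

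The main obstacle, and really the only substantive point beyond citing prior results, is verifying carefully that the Cox ring is preserved under the small $\mathbb{Q}$-factorialization: one must check that the chosen group $\Gamma$ on $X$ with $\Gamma_{\mathbb{Q}}\simeq\mathrm{Cl}(X)_{\mathbb{Q}}$ transports via strict transform to a group $\Gamma'$ on $X'$ with $\Gamma'_{\mathbb{Q}}\simeq\mathrm{Cl}(X')_{\mathbb{Q}}$, so that $\Gamma'$ indeed computes a Cox ring of $X'$. The ambiguity discussion in Remark \ref{ambiguity of Cox ring} ensures that replacing $\Gamma$ by a positive multiple if necessary causes no loss, since the finite generation and log-terminality of a Cox ring are independent of the choice of the defining group in characteristic zero.
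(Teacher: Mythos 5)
Your proposal is correct and follows essentially the same route as the paper: pass to a small $\mathbb{Q}$-factorialization (Proposition \ref{Q-factorization} in one direction, \cite[Corollary 1.4.3]{bchm} in the other), transport the Fano type property across the small map via Lemma \ref{inv-flip}, and use the invariance of Cox rings under small birational morphisms to invoke Theorem \ref{sannai conj}. Your explicit verification that $H^0(X,\mathcal{O}_X(D))\simeq H^0(X',\mathcal{O}_{X'}(D'))$ and that $\Gamma'$ still computes a Cox ring of $X'$ merely spells out what the paper states in one line ("its Cox rings are the same as those of $\tilde{X}$").
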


\begin{proof}
Suppose that $X$ is of Fano type. Then we can take a small $\mathbb{Q}$-factorization
$f:\tilde{X}\to X$ (see \cite[Corollary 1.4.3]{bchm}) and show that $\tilde{X}$ also is of Fano type by Lemma \ref{inv-flip}.
By Theorem \ref{sannai conj},
we see that $\tilde{X}$ is a Mori dream space and its Cox rings have only log terminal singularities.
Since $f$ is small, we see that $X$ is a not necessarily $\mathbb{Q}$-factorial
Mori dream space, and its Cox rings are the same as those of $\tilde{X}$.

Conversely, suppose that $X$ is a not necessarily $\mathbb{Q}$-factorial
Mori dream space and its Cox rings have only log terminal singularities. 
Take a small $\mathbb{Q}$-factorization
$f:\tilde{X}\to X$ as in Proposition \ref{Q-factorization}. Again by Theorem \ref{sannai conj}
we see that $\tilde{X}$ is of Fano type. Applying the arguments in the proof of
Lemma \ref{inv-flip}, we see that $X$ is of Fano type.
\end{proof}

\begin{cor}[not necessarily $\mathbb{Q}$-factorial version of Theorem \ref{logfano}]\label{logfano'}
Let $X$ be a not necessarily $\mathbb{Q}$-factorial Mori dream space over a field of characteristic zero.
Then $X$ is of Fano type if and only if it is of globally $F$-regular type. 
\end{cor}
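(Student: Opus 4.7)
The plan is to reduce to Theorem \ref{logfano} by passing through a small $\mathbb{Q}$-factorial modification. By Proposition \ref{Q-factorization} we can choose a small birational morphism $f \colon \tilde{X} \to X$ from a $\mathbb{Q}$-factorial Mori dream space $\tilde{X}$, and Theorem \ref{logfano} then tells us that $\tilde{X}$ is of Fano type if and only if it is of globally $F$-regular type. So it suffices to prove two invariances along $f$: $X$ is of Fano type if and only if $\tilde{X}$ is, and $X$ is of globally $F$-regular type if and only if $\tilde{X}$ is.

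The first invariance is precisely Lemma \ref{inv-flip}. For the second, I would spread $f$ out to a morphism $f_A \colon \tilde{X}_A \to X_A$ of models over a finitely generated $\mathbb{Z}$-subalgebra $A \subset k$ and, after enlarging $A$ if necessary, arrange that the fiber $f_{\mu} \colon \tilde{X}_{\mu} \to X_{\mu}$ is a small birational morphism of normal projective varieties for every closed point $\mu \in \Spec \, A$. Then $\tilde{X}_{\mu}$ and $X_{\mu}$ share a common open subset whose complements in each of them have codimension at least two, so by the characterization of global $F$-regularity recalled in the proof of Lemma \ref{glFness for MMP} (cf.\ \cite[1.1.7 Lemma (iii)]{BK}, \cite[Lemma 2.9]{Hashi}), $\tilde{X}_{\mu}$ is globally $F$-regular if and only if $X_{\mu}$ is. Varying $\mu$ over a dense open set of closed points, this gives the desired equivalence between $\tilde{X}$ and $X$ being of globally $F$-regular type.

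The main obstacle is the spreading-out step: we must check that the smallness of $f$, namely that its exceptional locus has codimension at least two in $\tilde{X}$, persists after reduction modulo $p$ for almost all $p$. This should follow from a standard constructibility and upper-semicontinuity argument applied to the fiber dimensions of the exceptional locus in $\tilde{X}_A \to \Spec \, A$, combined with the generic flatness arguments already used in Section \ref{preliminaries}. Once this is in place, the corollary is obtained mechanically by concatenating Lemma \ref{inv-flip}, the fiberwise equivalence above, and Theorem \ref{logfano} applied to $\tilde{X}$, exactly in the spirit of the proof of Corollary \ref{non-Q-factorial version}.
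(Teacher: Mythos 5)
Your proposal is correct and follows essentially the same route as the paper: pass to the small $\mathbb{Q}$-factorialization of Proposition \ref{Q-factorization}, note that both Fano type (Lemma \ref{inv-flip}) and globally $F$-regular type (via Lemma \ref{glFness for MMP} after reduction modulo $p$) are invariant under small birational contractions, and apply Theorem \ref{logfano}. The spreading-out of smallness that you flag as the main obstacle is indeed the only technical point, and it is handled exactly as in the proof of Lemma \ref{good model}, where the paper already arranges that small maps remain small after enlarging $A$.
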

\begin{proof}
Since both of the notions are preserved by taking a small $\mathbb{Q}$-factorization
and taking a small birational contraction, we can prove the equivalence by
taking the small $\mathbb{Q}$-factorization of $X$ as in Proposition \ref{Q-factorization}
and then apply Theorem \ref{logfano}.
\end{proof}

As an application of Corollary \ref{logfano'}, we show that the image of a variety of Fano type
again is of Fano type, which was first proven in \cite{fg2} (see also \cite{fg1}).

\begin{cor}\label{images of logFano} 
Let $f:X \to Y$ be a surjective morphism between normal
projective varieties over an algebraically closed field $k$ of characteristic zero. 
If $X$ is of Fano type, then $Y$ is of Fano type. 
\end{cor}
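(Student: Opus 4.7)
The plan is to deduce the result from the non-$\mathbb{Q}$-factorial characterization (Corollary \ref{logfano'}) by showing that $Y$ is a not necessarily $\mathbb{Q}$-factorial Mori dream space of globally $F$-regular type.

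First, via \cite[Corollary 1.4.3]{bchm} I take a small $\mathbb{Q}$-factorialization $\pi\colon\tilde X\to X$, which remains of Fano type by Lemma \ref{inv-flip}, hence is a $\mathbb{Q}$-factorial Mori dream space; by Proposition \ref{just singularities}(2) it is of globally $F$-regular type. Replacing $f$ by $f\circ\pi\colon\tilde X\to Y$, I may assume from the outset that $X$ itself is a $\mathbb{Q}$-factorial Mori dream space of globally $F$-regular type.

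Next I take the Stein factorization $X\xrightarrow{h} Z\xrightarrow{g} Y$ of $f$, where $h$ is an algebraic fiber space and $g$ is finite surjective. Lemma \ref{glFness for MMP} applied to $h$ immediately shows $Z$ is of globally $F$-regular type. For the finite morphism $g$, since we work in characteristic zero, $g$ is generically separable, so the trace map provides a splitting $\mathcal{O}_Y\hookrightarrow g_*\mathcal{O}_Z$. Choosing an ample Cartier divisor $H$ on $Y$, this splitting induces a split inclusion $R(Y,H)\hookrightarrow R(Z,g^*H)$ of section rings, compatibly with modulo $p$ reductions for almost all $p$. Since strong $F$-regularity descends to direct summands \cite[Theorem 3.1]{hh} and the modulo $p$ reductions of $R(Z,g^*H)$ are strongly $F$-regular by Proposition \ref{glsec}, so are those of $R(Y,H)$; applying Proposition \ref{glsec} in the converse direction then yields that $Y$ is of globally $F$-regular type. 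Separately, that $Y$ is a not necessarily $\mathbb{Q}$-factorial Mori dream space follows from the preservation of this property under surjective morphisms, established in \cite[Section 10]{o}. Corollary \ref{logfano'} then concludes that $Y$ is of Fano type.

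The main technical obstacle I anticipate is making the descent of globally $F$-regular type across the finite part $g$ precise in the reduction modulo $p$ framework: one needs to exhibit a single model over a finitely generated $\mathbb{Z}$-subalgebra on which the Stein factorization, the finiteness and generic separability of $g$, and the trace splitting all persist on almost all closed fibers, so that the direct-summand argument can be applied fiber-by-fiber. A secondary point is the appeal to the external result of \cite[Section 10]{o} that surjective images of Mori dream spaces are again Mori dream spaces; if one wished to avoid citing this, one could instead attempt to realize the Cox ring of $Y$ directly as an invariant subring of the Cox ring of $X$ under the natural group action arising from the Stein factorization.
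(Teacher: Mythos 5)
Your argument is correct, and its skeleton is the same as the paper's: pass through the Stein factorization, show that $Y$ is a not necessarily $\mathbb{Q}$-factorial Mori dream space of globally $F$-regular type (using \cite{o} for the Mori dream space property and Lemma \ref{glFness for MMP} for the fiber-space part), and conclude with Corollary \ref{logfano'}. The one genuine divergence is your treatment of the finite part $g\colon Z\to Y$ of the Stein factorization. The paper simply outsources this case to \cite{fg2} and therefore only needs the globally $F$-regular machinery for the algebraic fiber space; you instead keep the finite case inside the characteristic-$p$ framework, using the trace splitting $\mathcal{O}_Y\hookrightarrow g_*\mathcal{O}_Z$ (valid since $\mathrm{char}\,k=0$ and $Y$ is normal) to exhibit $R(Y,H)$ as a direct summand of $R(Z,g^*H)$ and then descending strong $F$-regularity via \cite[Theorem 3.1]{hh}, exactly in the spirit of Lemma \ref{lem-ring-sum}. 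This is a legitimate and more self-contained alternative. Two remarks on tightening it: first, the cohomology-and-base-change issue you flag as the main obstacle can be bypassed entirely by constructing the trace splitting directly on the closed fibers $\mathcal{O}_{Y_\mu}\to g_{\mu*}\mathcal{O}_{Z_\mu}$ for $\mu$ of residue characteristic prime to $\deg g$ (for almost all $\mu$ the reduction $g_\mu$ remains finite, surjective and generically separable), after which Proposition \ref{glsec} applies fiber by fiber without identifying $(R(Z,g^*H))_\mu$ with $R(Z_\mu,g_\mu^*H_\mu)$; if one does want that identification, Claim \ref{okawa lem} supplies it once $Z_\mu$ is known to be globally $F$-split. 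Second, your initial passage to a small $\mathbb{Q}$-factorialization of $X$ is harmless but unnecessary: Proposition \ref{just singularities}(2) and Lemma \ref{glFness for MMP} do not require $\mathbb{Q}$-factoriality, and the Mori dream space input for $Y$ comes from \cite{o} in the not-necessarily-$\mathbb{Q}$-factorial sense in any case.
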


\begin{proof}
Taking the Stein factorization of $f$, we can assume either $f$ is an algebraic fiber space
or a finite morphism.
When $f$ is finite, it is dealt with in \cite{fg2}. Therefore we consider the case when $f$ is an algebraic
fiber space.
By \cite[Corollary 1.3.2]{bchm} and Theorem \ref{logfano'}, $X$ is a not necessarily $\mathbb{Q}$-factorial
Mori dream space and of globally $F$-regular type.
By \cite[Theorem 1.1]{o} and Lemma \ref{glFness for MMP}, we see that $Y$ is a
a not necessarily $\mathbb{Q}$-factorial Mori dream space
of globally $F$-regular type. Again by Theorem \ref{logfano}, we conclude that $Y$ is of Fano type.
\end{proof}

\begin{rem}\label{rem-algebraically closed} In this paper, the algebraic closedness of the ground field $k$ is used only where we use the results of \cite{bchm}.
\end{rem}

\end{document}